\documentclass[12pt,reqno]{article}
\usepackage{geometry}
 \geometry{
 a4paper,
 total={160mm,250mm},
 left=25mm,
 top=25mm,
 headsep = 5mm,
 footnotesep = 5mm,
 }
\usepackage[utf8]{inputenc}
\usepackage{amsfonts}
\usepackage{amsmath}
\usepackage{amsthm}
\usepackage{amssymb}
\usepackage{graphicx} 
\usepackage{tikz}
\usetikzlibrary{external}
\tikzexternalize 
\usetikzlibrary{decorations.pathreplacing}
\usepackage{enumitem}
\usepackage{hyperref}
\usepackage{xcolor}
\usepackage{mathtools}
\usepackage{relsize}
\usepackage{caption}

\usepackage{hyperref}
\hypersetup{
    colorlinks=true,
    linkcolor=blue,
    filecolor=magenta,      
    urlcolor=green,
}

\newtheorem{thm}{Theorem}[section]

\newtheorem{lem}[thm]{Lemma}

\newtheorem{cor}[thm]{Corollary}

\theoremstyle{definition}
\newtheorem{defn}[thm]{Definition}

\newtheorem{ex}[thm]{Example}

\newtheorem{rem}[thm]{Remark}

\numberwithin{equation}{section}
\newtheorem{op}[thm]{Problem}


\usepackage{algorithm}
\usepackage[noend]{algpseudocode}
\algnewcommand\algorithmicinput{\textbf{Input:}}
\algnewcommand\algorithmicoutput{\textbf{Output:}}
\algnewcommand\Input{\item[\algorithmicinput]}%
\algnewcommand\Output{\item[\algorithmicoutput]}


\renewcommand\bar\overline

\newcommand{\vol}{\mathrm{Vol}}
\newcommand{\nvol}{\mathrm{NVol}}

\usepackage[english]{babel}
\usepackage[autostyle, english = american]{csquotes}
\MakeOuterQuote{"}

\def\0{\mathbf{0}}
\newcommand{\bba}{\mathbf{a}}
\newcommand{\bbb}{\mathbf{b}}

\newcommand{\bbc}{\mathbf{c}}
\newcommand{\bbe}{\mathbf{e}}

\newcommand{\bbr}{\mathbf{r}}

\newcommand{\bbu}{\mathbf{u}}
\newcommand{\bbv}{\mathbf{v}}
\newcommand{\bbw}{\mathbf{w}}
\newcommand{\bbx}{\mathbf{x}}
\newcommand{\bby}{\mathbf{y}}
\newcommand{\bbz}{\mathbf{z}}

\newcommand{\conv}{\mathrm{conv}}

\newcommand{\ncone}{\mathrm{ncone}}

\newcommand{\Z}{\mathbb Z}
\newcommand{\N}{\mathbb N}

\newcommand{\R}{\mathbb R}

\newcommand\cov{\mathrel{\ooalign{$\prec$\cr
  \hidewidth\raise0.0ex\hbox{$\cdot\mkern0.9mu$}\cr}}}

\newcommand{\calC}{\mathcal{C}}

\newcommand{\calF}{\mathcal{F}}

\newcommand{\ads}{\mathbf{AdS}}
\newcommand{\sym}{\Delta^0}

\newcommand{\nn}{[n,\Bar{n}]}



\providecommand{\keywords}[1]
{
  \small	
  \textbf{\textit{Keywords---}} #1
}

\title{Counting Lattice Points in Generalized Permutohedra From A to B}

\author{Warut Thawinrak\thanks{Beijing International Center for Mathematical Research, Beijing China, \texttt{warutthawinrak@gmail.com}}}
\date{}


\begin{document}

\maketitle
\thispagestyle{empty}

\begin{abstract}
    We derive a formula for the number of lattice points in type B generalized permutohedra, providing a concise alternative to the formula obtained recently by Eur, Fink, Larson, and Spink as a result from a study of delta-matroids. Our approach builds upon the existing framework and techniques introduced by Postnikov in his work on type A generalized permutohedra, a family of polytopes interconnected with many mathematical concepts such as matroids and Weyl groups. In particular, we express the number of lattice points in type B generalized permutohedra in terms of Postnikov’s notion of G-draconian sequences, from which their Ehrhart polynomials and volume formula follow as consequences.
\end{abstract}

\keywords{polytope, generalized permutohedra, lattice point, Ehrhart polynomial}


\section{Introduction}

Type A generalized permutohedra form a rich class of polytopes that arise naturally in contexts such as the theory of matroids and Weyl groups, lying at the intersection of several areas of mathematics. Their geometric and combinatorial properties have been the subject of extensive study. In particular, Postnikov, Reiner, and Williams \cite{Postnikov2005, PostnikovEtal2008} established a comprehensive framework for type A generalized permutohedra, leading to extensive investigations of their lattice points, facial structures, various combinatorial interpretations, and applications. 

Type B generalized permutohedra, by contrast, form a broader class of polytopes encompassing those of type A but remain comparatively less explored. Given the depth and maturity of the results and techniques in the type A setting, it is natural to seek generalizations or, in some instances, direct adaptations of these tools to the type B context, thereby uncovering properties that both parallel and extend those known for type A. In this paper, we employ Postnikov’s framework \cite{Postnikov2005} for type A generalized permutohedra to count the lattice points in type B generalized permutohedra. Specifically, by viewing a type B generalized permutohedron in each orthant as a type A generalized permutohedron, we can apply Postnikov's approach to derive analogous lattice-point enumerations via polytopal subdivisions into cells that correspond bijectively to both their lattice points and the so-called \emph{$G$-draconian sequences}. This yields a concise alternative to the formula recently obtained by Eur, Fink, Larson, and Spink \cite[Theorem A]{Euretall2024} from their study of delta-matroids. As a consequence, we also obtain formulas for their Ehrhart polynomials and volume. Moreover, this approach suggests that many aspects of type B generalized permutohedra may be effectively analyzed using existing notions and techniques developed for their type A counterparts.

\subsection*{Paper Organization} We begin by introducing  generalized permutohedra, along with a review of existing techniques for enumerating lattice points in these polytopes. We then develop a method for realizing part of a type B generalized permutohedron as a type $A$ generalize permutohedron. Using this realization, we show how the number of lattice points in type $B$ generalized permutohedra can be computed in terms of the so-called $G$-draconian sequences. Lastly, we propose questions for future work on how the combinatorial properties of type B generalized permutohedra might be deduced from those of their type A counterparts.  

\section{Preliminaries}
\subsection{Polyhedra}

A \emph{polyhedron} $P$ in $\R^n$ is the solution to a finite system of linear inequalities, that is, 
\begin{align}\label{eq:polyhedra-inq}
    P = \left\{\bbx=(x_1, \dots, x_n) \in \R^n\ \Big\vert \  \bba_i \cdot \bbx \leq b_i \text{ for } i \in I\right\}
\end{align}
for some $\bba_i \in \R^n$ and $b_i \in \R,$ where the dot $\cdot$ is the usual dot product and $I$ is a finite set of indices. The dimension of $P$, denoted by $\dim(P),$ is defined to be the dimension of $\mathrm{aff}(P)$ the affine span of $P$.

A \emph{polytope} $P$ is a bounded polyhedron. By the Minkowski-Weyl Theorem \cite{Minkowski1968, Weyl1934}, we can equivalently define a polytope in $\R^n$ as the convex hull of finitely many points in $\R^n,$ i.e.,
\[P = \conv(\bbx_1, \dots, \bbx_k) := \{\lambda_1\bbx_1 +\cdots + \lambda_k\bbx_k\ | \ \lambda_1 + \cdots + \lambda_k = 1, \lambda_i \geq 0 \text{ for all } i \in [k]\}.\]
We denote by $\vol(P)$ the volume of $P$ with respect to the lattice $\Z^n \cap \mathrm{aff}(P)$ in the affine span of $P.$ The \emph{normalized volume} of a $d$-dimensional polytope is defined to be $\nvol(P):= d!\vol(P).$

For two nonempty polytopes $P_1, P_2$ in $\R^n$, the \emph{Minkowski sum} of $P_1$ and $P_2$, denoted by $P_1 + P_2$,  is the set $\{\bbx_1 + \bbx_2 \in \R^n \,|\, \bbx_1 \in P_1, \bbx_2 \in P_2\}$. If $U$ and $V$ are the set of vertices of $P_1$ and $P_2$, respectively, then $P_1+ P_2 = \conv(\bbu + \bbv \ | \ \bbu \in U, \bbv \in V)$. This implies that a Minkowski sum of polytopes is a polytope. The \emph{Minkowski difference} of $P_2$ in $P_1$, denoted by $P_1 - P_2$, is the set $\{\bbx \in \R^d \,|\, \bbx + P_2 \subseteq P_1\}$. Since the vectors that translate $P_2$ to lie in $P_1$ form a polytope, it follows that a Minkowski difference of two polytopes is also a polytope. It is important to note that, in general, the Minkowski difference on polytopes is neither a commutative nor an associative operator. For example, while $(P_1 + P_2) - P_2 = P_1$ always holds, the expression $(P_1 - P_2) + P_2$ may not equal $P_1$, or may even be undefined if the difference $P_1 - P_2$ is empty. Thus, it is crucial to clearly specify the order of the sum (and difference). Given nonempty polytopes $P_i$ in $\R^n$ and signs $\delta_i \in \{1, -1\}$ for $i \in [m]$, we define the Minkowski sum
\[\sum^{m}_{i = 1}\delta_i P_i := Q_1 - Q_2 \text{ where } Q_1 := \sum_{\delta_i = 1} P_i \text{ and } Q_2 := \sum_{\delta_i = -1} P_i.\]

 A subset $F$ of a polytope $P \subset \R^n$ is said to be a \emph{face} of $P$ if there exists a hyperplane $H$ such that $P$ lies on one side of $H$ and $F = P\cap H.$ A face of dimension $\dim(P) - 1$ is called a \emph{facet}, a face of dimension $1$ is called an \emph{edge}, and a face of dimension 0 is called a \emph{vertex}. The partially ordered set $\calF(P)$ of all faces of $P$ ordered by inclusion is called the \emph{face poset} of $P$.
 
 A \emph{cone} $\sigma$ is a polyhedron defined by a system of homogeneous linear inequalities, i.e. inequalities of the form $\bba\cdot \bbx \leq 0$. Given a nonempty face $F$ of a polytope $P \subset \R^n$, the \emph{normal cone} of $P$ at $F$ is the set
\[\ncone(F,P) := \{\bbc \in \R^n \ | \ \bbc\cdot \bbx \geq \bbc \cdot \bby \text{ for all } \bbx \in F \text{ and all } \bby \in P\},\]
that is, $\ncone(F,P)$ is the set of all $\bbc \in \R^n$ such that $\bbc \cdot \bbx$ attains maximum value at $F$ over all points in $P$. The \emph{normal fan} of $P$, denoted $\Sigma(P)$, is the set of normal cones of $P$ at all of its nonempty faces. We say that a polytope $Q$ is a \emph{deformation} of another polytope $P$ if $\Sigma(Q)$ is a coarsening of $\Sigma(P)$, i.e., every cone in $\Sigma(Q)$ is a union of cones in $\Sigma(P)$. A deformation $Q$ of $P$ can be obtained by parallel translations of the facets of $P$.

A (\emph{polyhedral}) \emph{subdivision} of a polytope $P$ is a collection $\calC$ of polytopes of dimension $\dim(P)$, called \emph{cells}, such that $P$ equals the union of all cells in $\calC$, and any two cells intersect only at their common face.

A \emph{lattice point} is a point whose coordinates are integers. A polytope is said to be  \textit{integral} if all of its vertices are lattice points. For a polytope $P$ in $\R^n$ and a non-negative integer $t,$ the $t^{\mathrm{th}}$-dilation $tP$ is the set $\{tx \,|\, x \in P\}.$ We define
$i(P,t) := |\Z^n \cap tP|$ to be the number of lattice points in the $t^\mathrm{th}$-dilation $tP.$ It follows from Ehrhart theory \cite{Ehrhart1962} that when $P$ is an integral polytope, the function $i(P,t)$ is a polynomial in $t$ of degree $\dim(P)$. We call $i(P,t)$ the \emph{Ehrhart polynomial} of $P$. It encodes various geometric and combinatorial properties of $P$: its leading coefficient equals $\vol(P)$, the second coefficient equals half of the sum of its facets' areas (with respect to the lattice in the affine span of each facet), and its constant term is one.

Two integral polytopes $P, Q$ such that $P \subset \R^n$ and $Q \subset \R^m$ are said to be \emph{integrally equivalent} if there exists an invertible affine transformation from $\mathrm{aff}(P)$ to $\mathrm{aff}(Q)$ that preserves the lattice points in the two polytopes. When two integral polytopes are integrally equivalent, they have the same face poset, volume, and Ehrhart polynomials.

\subsection{Type A generalized Permutohedra}

In this section, we introduce the family of polytopes known as \emph{type A generalized permutohedra}, following the notation and framework established by Postnikov in \cite{Postnikov2005}. For a more comprehensive treatment beyond what is presented here, we refer the reader to \cite{Postnikov2005}.

A \emph{type A generalized permutohedron} $P$ in $\R^n$ is a polytope whose edges are parallel to $\bbe_i - \bbe_j$ for some $1 \leq i < j \leq n,$ where $\bbe_i$ denotes the standard basis vector in $\R^n.$ We note that it is also known simply as a \emph{generalized permutohedron}. We include ``type A'' in the name to emphasize the fact that the edge directions of these polytopes are parallel to some vectors in type A positive root system $\{\bbe_i - \bbe_j\ | \ 1 \leq i < j \leq n\}$. 

Let $\bbw = (w_1, \dots, w_n) \in \R^n$ be a point satisfying $w_1 > \cdots > w_n \geq 0.$ The \emph{permutohedron} $\Pi(\bbw)$, defined as the convex hull of all permutations of the coordinates of $\bbw$, is an example of a type $A$ generalized permutohedron. See Figure \ref{fig: permutohedra} for examples of permutohedra and other type A generalized permutohedra. The normal fan of any permutohedron, called the \emph{braid fan} and denoted by $\Sigma_{A_{n-1}}$, is composed of maximal cones defined by the chambers in the arrangement of the hyperplanes
\[H_{i,j} = \{(c_1,\dots, c_n) \in \R^n \ |\ c_i - c_j = 0\} \text{ for all } 1 \leq i < j \leq n,\]
known as the \emph{braid arrangement}. We note that one can alternatively define a type A generalized permutohedron as any deformation of a permutohedron, i.e., a polytope whose normal fan coarsens the braid fan. 

The family of type A generalized permutohedra is a widely studied class of polytopes that interconnect with various combinatorial objects, such as matroids and Weyl groups. Beyond permutohedra, many well-known polytopes in the literature can be realized as type A generalized permutohedra, including zonotopes, associahedra, cyclohedra, and the Pitman-Stanley polytopes (see \cite[Section 8]{Postnikov2005}). Note that every type A generalized permutohedron $P \subset \R^n$ lies on a hyperplane $x_1+ \cdots + x_n = a$ for some real number $a.$ For instance, the permutohedron $\Pi(w_1, \dots, w_n)$ lies on the hyperplane $x_1 + \cdots + x_n = w_1 + \cdots + w_n.$ Thus, the dimension of $P$ is at most $n-1.$ 

\begin{figure}[h!]
    \centering
    \includegraphics[width=0.9\linewidth]{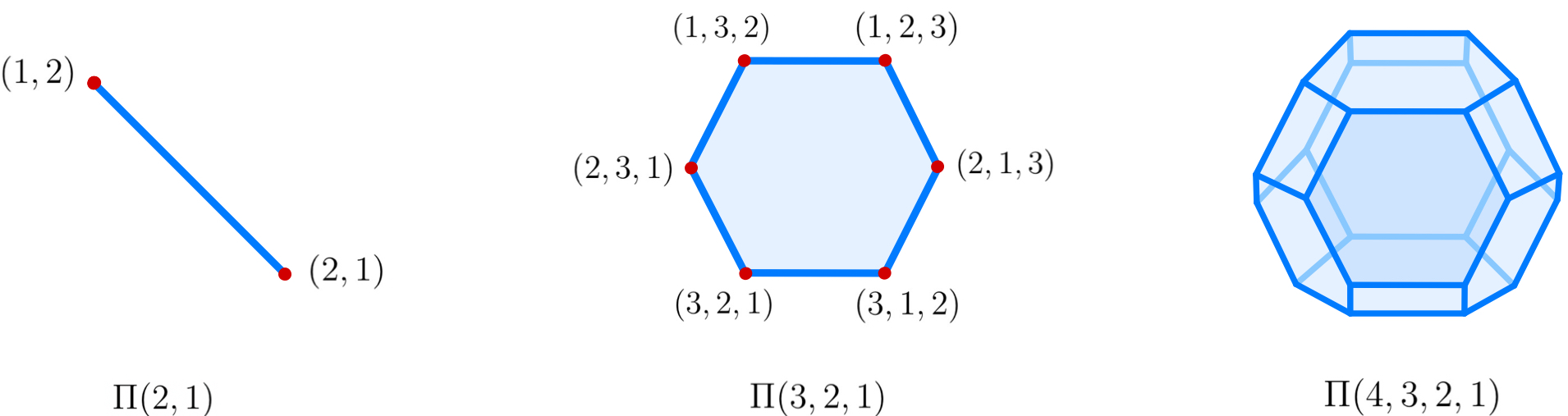}
\end{figure}
\begin{figure}[h!]
    \centering
    \includegraphics[width=0.9\linewidth]{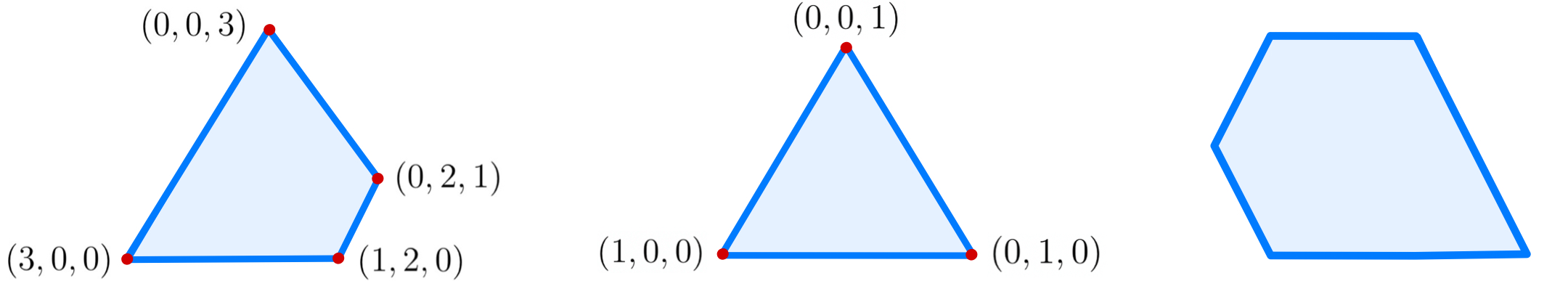}
    \caption{Examples of type A generalized permutohedra}
    \label{fig: permutohedra}
\end{figure}

For a nonempty subset $I \subseteq [n],$ we define $\Delta_I$ to be the simplex $\conv(\bbe_i\ | \ i \in I)$. Postnikov \cite[Proposition 6.3]{Postnikov2005} and Ardila, Benedetti, and Doker \cite[Proposition 2.3]{Ardila2010} show that every type A generalized permutohedron can be written as a Minkowski sum (and difference) of simplices. The following lemma states this more precisely.

\begin{lem}[\cite{Postnikov2005},\cite{Ardila2010}]\label{lem: type-A-Minkowski-sum}
    For every type A generalized permutohedron $P \subset \R^n$, there exists a partition of $2^{[n]}\backslash \{\emptyset\}$ into two disjoint subsets $M_1$ and $M_1$ such that
\begin{align}\label{eq: typeA-Minkowski-sum}
    P + \sum_{I \in M_1}y_I\Delta_I = \sum_{I \in M_2}y_I\Delta_I
\end{align}
for some $y_I > 0$ for all $I \in M_1$ and some $y_I \geq 0$ for all $I \in M_2.$ Moreover, the coefficients $y_I$ in \eqref{eq: typeA-Minkowski-sum} are unique. In particular, there exists a sequence of real numbers $(y_I)_{I \in 2^{[n]}}$ such that 
\[P = \sum_{I \in 2^{[n]}\backslash \{\emptyset\}}y_I\Delta_I.\]
\end{lem}

\begin{figure}
        \centering
        \includegraphics[width=1\linewidth]{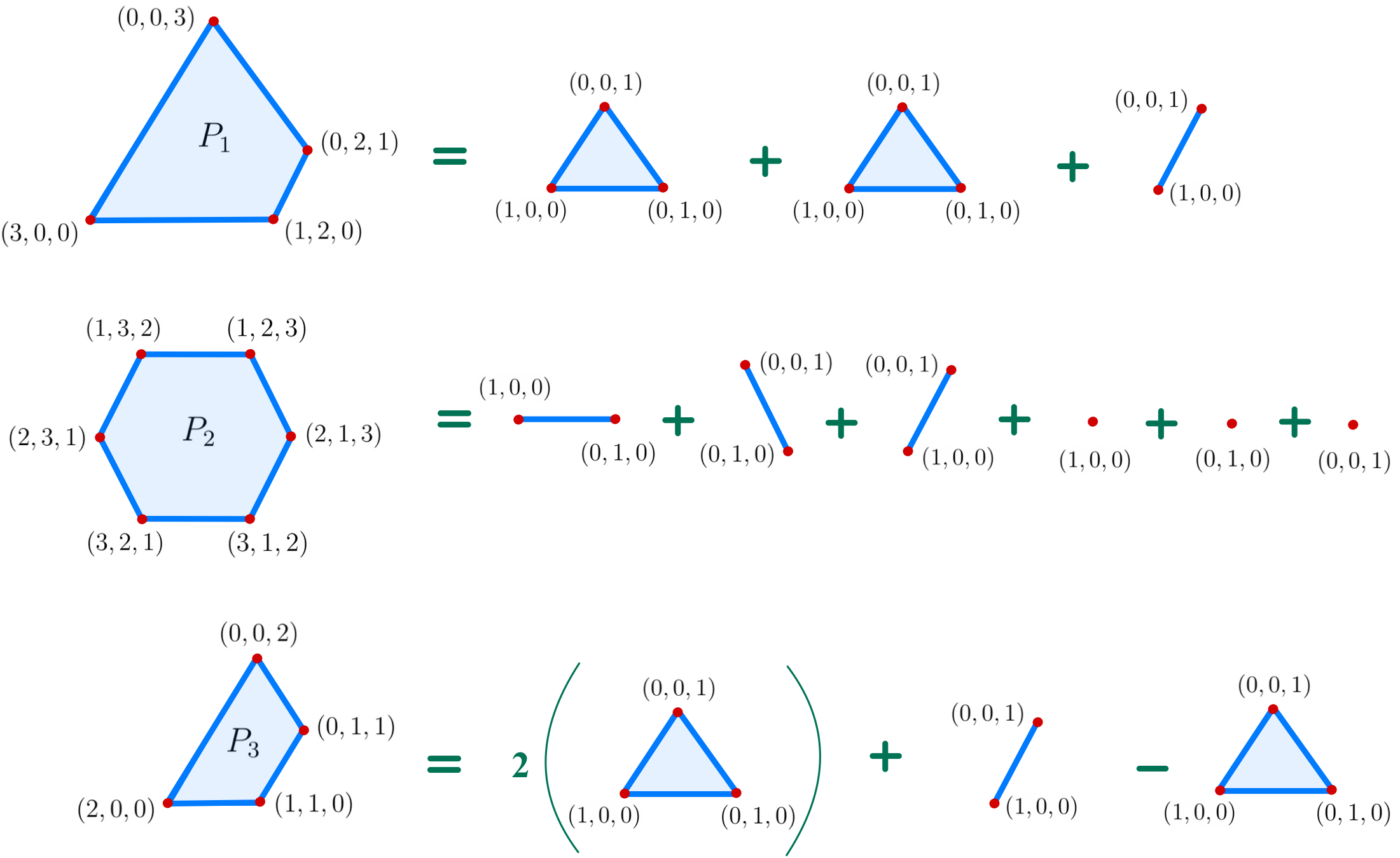}
        \caption{Type-$A$ generalized permutohedra as Minkowski sums of simplices}
        \label{fig: Minkowski-sums-type-a}
    \end{figure}

Thus, we may write every permutohedron $P$ as $P= P(\{y_I\})$. Note that when $P(\{y_I\})$ is integral, the coefficients $y_I$ are integers for all nonempty subsets $I \subseteq [n].$

\begin{ex}\label{ex: type-A-Minkowski-sum} In Figure \ref{fig: Minkowski-sums-type-a}, three examples of type A generalized permutohedra in $\R^3$ are written as Minkowski sums of simplices. One sees that
\begin{align*}
    P_1 &= \Delta_{[3]} + \Delta_{[3]} + \Delta_{\{1,3\}}\\
    P_2 &= \Delta_{\{1,2\}} + \Delta_{\{2,3\}} + \Delta_{\{1,3\}} + \Delta_{\{1\}}+ \Delta_{\{2\}} + \Delta_{\{3\}}\\
    P_3 &= 2\Delta_{[3]} + \Delta_{\{1,3\}} - \Delta_{[3]} = \Delta_{[3]} + \Delta_{\{1,3\}}.
\end{align*}
\end{ex}

Given a bipartite graph $G$ on $m$ left vertices $\{\ell_1, \dots, \ell_m\}$ and $n$ right vertices $\{r_1, \dots, r_n\}$, we denote by $N(v)$ the set of the vertices of $G$ adjacent to the vertex $v$. We call $N(v)$ \emph{the set of neighbors} of $v$. For $ i \in [m]$, we let 
\begin{align}\label{eq: neighbor-of-left-i}
    I_i:= \{j \in [n]\ | \ r_j \in N(\ell_i)\}
\end{align} 
be the set of indices of the neighbors of the left vertex $\ell_i.$ Then, one may define $P_G(y_1, \dots, y_m)$ to be the type A generalized permutohedron $y_1\Delta_{I_{1}} + \cdots + y_m\Delta_{I_{m}}.$ 

Conversely, given $P = y_1\Delta_{I_{1}} + \cdots + y_m\Delta_{I_{m}} \subseteq \R^n$, there is a bipartite graph $G$ on $m$ left and $n$ right vertices such that $P_G(y_1, \dots, y_m) = P.$ Thus, we may interchangeably write $P(\{y_I\})$ as $P_G(y_1, \dots, y_m)$ where $G$ is a corresponding bipartite graph. 

\begin{rem}\label{rem: full-dim-typeA-G-connected}
    If $P_G(y_1, \dots,y_m) \subseteq \R^n$ is $(n-1)$-dimensional, then $G$ is a connected graph.
\end{rem}

\begin{ex}
    Let $G_1, G_2, G_3, G_4$ be bipartite graphs shown in Figure \ref{fig: graphs-type-a}. Then, we can write the type-$A$ generalized permutoheda $P_1, P_2,$ and $P_3$ in Figure \ref{fig: Minkowski-sums-type-a} as
    \begin{align*}
        P_1 &= P_{G_1}(1,1,1)\\
        P_2 &= P_{G_2}(1,1,1,1,1,1)\\
        P_3 &= P_{G_3}(2,1,-1) = P_1 - \Delta_{[3]}= P_{G_4}(1,1,1,-1).
    \end{align*}
\end{ex}

\begin{figure}[h!]
    \centering
    \includegraphics[width=1\linewidth]{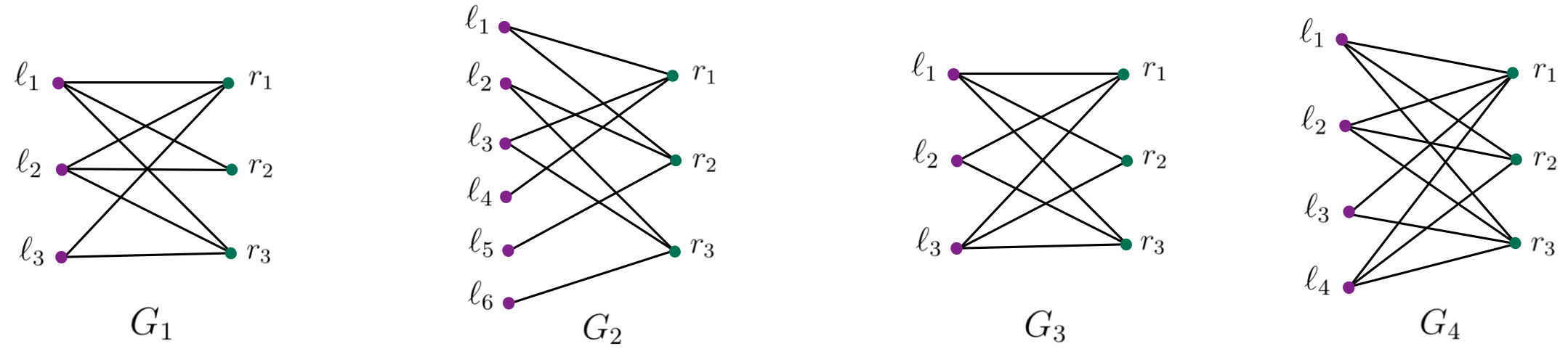}
    \caption{Bipartite graphs associated to polytopes in Figure \ref{fig: Minkowski-sums-type-a}}
    \label{fig: graphs-type-a}
\end{figure}

The rest of this section is devoted to outlining how Postnikov derives a formula for the number of lattice points in type A generalized permutohedra. Since these polytopes are deformations of a permutohedron, it suffices by \cite[Remark 6.4]{Postnikov2005} to only obtain a formula that holds for $P_G(y_1, \dots, y_m) = y_1\Delta_{I_{1}} + \cdots + y_m\Delta_{I_{m}}$ where $y_i$ are positive integers for all $i \in [m]$. Moreover, because every simplex $y\Delta_I$ where $y$ is a positive integer can be written as the Minkowski sum of $y$ copies of the simplex $\Delta_I$, Postnikov first enumerates the number of lattice points in $P_G(1, \dots, 1) = \Delta_{I_{1}} + \cdots + \Delta_{I_{m}}.$ To do so, the Cayley trick is employed to subdivide $P_G(1, \dots, 1)$ into \emph{fine mixed cells} and used as a key to counting the lattice points. We refer the reader to \cite[Section 14]{Postnikov2005} for more details regarding the subdivision.
  
\begin{defn}\label{def: mixed-subdivision}
    Let $P_G(1, \dots, 1) = \Delta_{I_{1}} + \cdots + \Delta_{I_{m}}$. Then, a \emph{fine mixed cell} of $P_G(1, \dots, 1)$ is a polytope $\Pi$ of the form $\Delta_{J_1} + \cdots + \Delta_{J_m}$ where $J_i \subseteq I_i$ for all $i \in [m]$ and satisfies $\dim(\Delta_{J_1}) + \cdots + \dim(\Delta_{J_m}) = \dim(P_G(1, \dots, 1)).$ A \emph{fine mixed subdivision} of $P_G(1,\dots, 1)$ is a polyhedral subdivision of $P_G(1,\dots,1)$ into fine mixed cells.
\end{defn}

\begin{ex}
    Let $P := P_G(1,1,1)$ be the polytope shown in Figure \ref{fig: fineMixedCell-type-a} (also shown as $P_1$ in Figure \ref{fig: Minkowski-sums-type-a}). A fine mixed subdivision of $P$ is drawn inside of $P$. One sees that there are five fine mixed cells, labeled as $\Pi_1, \dots, \Pi_5$ in the subdivision.
\end{ex}

If $\Pi$ is a fine mixed cell of $P_G(1, \dots, 1)$, then there exists a bipartite subgraph $H$ of $G$ such that $\Pi = P_H(1, \dots, 1)$. Moreover, one can show that such a subgraph $H$ is a spanning forest (a subgraph with no cycles) of $G$. We say that $H$ is a \emph{corresponding subgraph} of $\Pi$. For example, when $P := P_G(1,1,1)$ is the polytope shown in Figure \ref{fig: fineMixedCell-type-a}, the fine mixed cells in the given subdivision of $P$, labeled as $\Pi_1, \dots, \Pi_5$, and their corresponding subgraphs $H_1, \dots, H_5$ of $G$ are also shown in the figure. A lattice point in a fine mixed cell of $P_G(1,\dots, 1)$ has a simple form in the following sense.
\begin{lem}[\cite{Postnikov2005}]\label{lem: lattice-points-in-fine-mixed-cell}
    Let $H$ be a bipartite graph on $m$ left vertices and $n$ right vertices. If $H$ is a forest, then every lattice point in $\Pi:=P_{H}(1, \dots, 1)$ is a vertex of $\Pi$. Moreover, every lattice point in $\Pi$ has the form $\bbe_{j_{1}} + \cdots + \bbe_{j_m}$ for some $j_i$ such that $r_{j_i} \in N(\ell_i)$ for all $i \in [m].$ 
\end{lem}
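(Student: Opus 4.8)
The plan is to work with the description $\Pi = \Delta_{J_1} + \cdots + \Delta_{J_m}$, where $J_i := \{\, j \in [n] : r_j \in N(\ell_i)\,\}$; since the ``moreover'' clause presupposes $N(\ell_i)\neq\emptyset$, we may assume $H$ has no isolated left vertex. A point $\bbx \in \Pi$ decomposes as $\bbx = \bbv_1 + \cdots + \bbv_m$ with $\bbv_i = \sum_{j \in J_i}\lambda_{ij}\,\bbe_j \in \Delta_{J_i}$, which amounts to recording a real number $\lambda_{ij}\geq 0$ on each edge $(\ell_i, r_j)$ of $H$ so that the weights at each left vertex $\ell_i$ sum to $1$ and the weights at each right vertex $r_j$ sum to the coordinate $x_j$. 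Both assertions of the lemma will follow once we understand such weightings when $\bbx$ is integral.

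First I would prove the key claim: \emph{if $\bbx \in \Z^n$, then every $\lambda_{ij}$ lies in $\{0,1\}$}; this is where the forest hypothesis enters, and I expect it to be the main obstacle. The argument is an induction on the number of edges of $H$ that peels leaves. If a leaf is a right vertex $r_j$ with unique neighbor $\ell_i$, then $\lambda_{ij} = x_j$ is an integer in $[0,1]$, hence $0$ or $1$, and if it is $1$ the row constraint forces the remaining weights at $\ell_i$ to vanish; if a leaf is a left vertex $\ell_i$ with unique neighbor $r_j$, then $\lambda_{ij}=1$ by the row constraint at $\ell_i$. In either case one deletes the edges whose weights have been forced (and a left vertex whose whole row is thereby determined), subtracts the corresponding standard basis vectors from $\bbx$, and is left with a strictly smaller instance of the same shape — a forest with no isolated left vertex, unit left-sums, and nonnegative integer right-sums — to which the induction applies. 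The only care needed is to verify that each reduction preserves these properties; in particular that no left vertex is stranded with an empty (hence unsatisfiable) sum constraint, which is a short case check.

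Granting the claim, each $\bbv_i$ equals some $\bbe_{j_i}$ with $r_{j_i}\in N(\ell_i)$, so $\bbx = \bbe_{j_1}+\cdots+\bbe_{j_m}$; this is the ``moreover'' statement. It remains to show that such an $\bbx$ is a vertex of $\Pi$. For this I would exhibit a linear functional $\bbc\in\R^n$ with $c_{j_i} > c_j$ for every $i\in[m]$ and every $j\in J_i\setminus\{j_i\}$; then $\bbc$ attains its maximum over each $\Delta_{J_i}$ uniquely at $\bbe_{j_i}$, hence over the Minkowski sum $\Pi$ uniquely at $\bbe_{j_1}+\cdots+\bbe_{j_m}=\bbx$, so $\bbx$ is a vertex. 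Such a $\bbc$ exists exactly when the directed graph $D$ on vertex set $[n]$, having an arc $j\to j_i$ whenever $i\in[m]$ and $j\in J_i\setminus\{j_i\}$, is acyclic. A simple directed cycle in $D$ would, through the left vertices witnessing its arcs, yield an alternating closed walk in $H$; one checks that these left vertices are pairwise distinct — each is pinned down by its distinguished neighbor, and these distinguished neighbors are exactly the distinct vertices of the cycle — and that the right vertices along the walk are distinct, so the walk is a genuine cycle in $H$, contradicting that $H$ is a forest. Hence $D$ is acyclic, the functional $\bbc$ exists, and the proof is complete.
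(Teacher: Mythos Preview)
The paper does not actually prove this lemma; it is stated in the preliminaries (Section~2.2) as background drawn from Postnikov's framework \cite{Postnikov2005}, with no accompanying proof. So there is no ``paper's own proof'' to compare against here, and I can only assess your argument on its own.

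Your argument is correct. The leaf-peeling induction cleanly forces each edge weight in turn, and the case check you flag is indeed short: deleting a right leaf's single edge can only isolate $\ell_i$ if $\ell_i$ was itself a left leaf, but then the row constraint forces $\lambda_{ij}=1$ rather than $0$; deleting an entire left vertex $\ell_i$ (in the $\lambda_{ij}=1$ branch) removes only edges incident to $\ell_i$, so no other left vertex loses a neighbor; and deleting a left leaf likewise leaves all other left vertices untouched. The digraph argument for the vertex claim is also correct --- your observation that the witnessing left vertices $i_t$ along a putative simple cycle are pinned down by their distinct targets $j_{i_t}$ is exactly what makes the alternating closed walk in $H$ a genuine cycle.

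One optional simplification of the vertex step: the same leaf-peeling, applied to the \emph{difference} of two decompositions, shows that for a forest $H$ the decomposition $\bbx=\bbv_1+\cdots+\bbv_m$ with $\bbv_i\in\Delta_{J_i}$ is unique for every $\bbx\in\Pi$ (integrality is only needed to force $\lambda_{ij}\in\{0,1\}$, not for uniqueness). Granting uniqueness, if $\bbx=\tfrac12(\bby+\bbz)$ with $\bby,\bbz\in\Pi$, averaging decompositions of $\bby$ and $\bbz$ gives a decomposition of $\bbx$, which must equal $\sum_i \bbe_{j_i}$; extremality of $\bbe_{j_i}$ in $\Delta_{J_i}$ then forces $\bby=\bbz$. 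But your linear-functional approach is equally valid and arguably more explicit.
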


\begin{rem}\label{rem: integers-in-fine-mixed-cells}
    Lemma \ref{lem: lattice-points-in-fine-mixed-cell} states that every lattice point in $\Pi$ corresponds to a transversal $\{r_{j_1}, \dots, r_{j_m}\}$ of the sequence $(N(\ell_1), \dots, N(\ell_m))$ of the neighbors of the left vertices.
\end{rem}

The next two definitions introduce combinatorial objects used for counting the lattice points in type A generalized permutohedra.

\begin{defn}\label{def: G-draconian-type-A}
    Given a bipartite graph $G$ on $m$ left vertices and $n$ right vertices, we let $I_i$ be defined as in equation \eqref{eq: neighbor-of-left-i}. A sequence $(a_1, \dots, a_m)$ of nonnegative integers is a \emph{$G$-draconian} sequence if $a_1 + \cdots a_m = n-1$ and for every nonempty subset $J \subseteq [m]$
    \[\sum_{i \in J}a_i \leq \big| \bigcup_{i \in J} I_i\big| - 1.\]
    The set of all $G$-draconian sequences is denoted by $\mathrm{D}(G).$
\end{defn} 

\begin{defn}\label{def: left-right-degree}
    Given a bipartite graph $G$ on $m$ left vertices $\{\ell_1, \dots, \ell_m\}$ and $n$ right vertices $\{r_1, \dots, r_n\},$ we define the \emph{left degree} $LD(G)$ and the \emph{right degree} $RD(G)$ of $G$, respectively, as
\[LD(G) := (\deg(\ell_1)-1, \dots, \deg(\ell_m)-1) \text{ and } RD(G) := (\deg(r_1)-1, \dots, \deg(r_n)-1).\]
\end{defn}

Fine mixed cells in a fine mixed subdivision $P_G(1,\dots, 1)$ and $G$-draconian sequences are shown in \cite[Lemma 12.6, Lemma 12.8, and Theorem 12.9]{Postnikov2005} to correspond to subgraphs of $G.$

\begin{lem}\label{lem: fine-mixed-subdivision-trees}
    Suppose that $P_G(1,\dots, 1)$ is $(n-1)$-dimensional. Let $\calC = \{\Pi_1, \dots, \Pi_p\}$ be a fine mixed subdivision of $P_G(1, \dots, 1).$ Then, there exists a sequence of spanning trees $H_1, \dots, H_p$ of $G$ satisfying all of the following properties.
    \begin{enumerate}
        \item\label{itm: fine-mixed-cells-trees} For all $i \in [p]$, we have $\Pi_i = P_{H_i}(1, \dots, 1)$
        \item\label{itm: right-deg-lattice-point} The right degrees $RD(H_1),\dots, RD(H_p) \in \Z^n$ are all distinct and
        $$(P_G(1, \dots, 1) - \Delta_{[n]})\cap \Z^n = \{RD(H_1), \dots, RD(H_p)\}.$$
        \item\label{itm: left-deg-g-draconion} The left degrees $LD(H_1),\dots, LD(H_p) \in \Z^m$ are all distinct and 
        $$D(G) = \{LD(H_1), \dots, LD(H_p)\}.$$
    \end{enumerate}
\end{lem}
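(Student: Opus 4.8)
\emph{Overview.} This statement repackages Postnikov's \cite[Lemma 12.6, Lemma 12.8, and Theorem 12.9]{Postnikov2005}, and I would prove it by establishing, step by step, the correspondences between the fine mixed cells of $\calC$, the spanning trees of $G$ they determine, the lattice points of $P_G(1,\dots,1)-\Delta_{[n]}$, and the $G$-draconian sequences, verifying parts (\ref{itm: fine-mixed-cells-trees}), (\ref{itm: right-deg-lattice-point}), and (\ref{itm: left-deg-g-draconion}) in that order.

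\emph{Part (\ref{itm: fine-mixed-cells-trees}).} Let $\Pi = \Delta_{J_1}+\cdots+\Delta_{J_m}$ be a fine mixed cell, so each $J_i\subseteq I_i$ is nonempty and $\sum_i(|J_i|-1)=n-1$; let $H$ be the bipartite subgraph of $G$ with $N_H(\ell_i)=J_i$, so $|E(H)| = \sum_i|J_i| = m+n-1$. Choosing a vertex $\bbe_{j_1}+\cdots+\bbe_{j_m}$ of $\Pi$ with $j_i\in J_i$, the linear span of $\Pi$ translated to the origin is $\sum_i\spann\{\bbe_j-\bbe_k : j,k\in J_i\}$, and since $\{j,k\}\subseteq J_i$ exactly when $r_j$ and $r_k$ are both adjacent to $\ell_i$, telescoping along paths shows this span equals $\spann\{\bbe_j-\bbe_k : r_j,r_k \text{ lie in one connected component of } H\}$, which has dimension $n-c$, where $c$ is the number of connected components of $H$. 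Hence $\dim\Pi=n-1$ forces $c=1$; a connected graph on $m+n$ vertices with $m+n-1$ edges is a tree, and it is spanning (no $J_i$ is empty, and $c=1$ precludes isolated right vertices), so $\Pi=P_H(1,\dots,1)$ with $H$ a spanning tree of $G$.

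\emph{Part (\ref{itm: right-deg-lattice-point}).} Consider the assignment $\Pi_i\mapsto RD(H_i)$. To see $RD(H_i)\in (P_G(1,\dots,1)-\Delta_{[n]})\cap\Z^n$, root $H_i$ at an arbitrary right vertex $r_{j_0}$: each left vertex $\ell_k$ then has a unique parent $r_{p(k)}$ with $p(k)\in N_{H_i}(\ell_k)$, and counting left children of each right vertex gives $\sum_k\bbe_{p(k)} = RD(H_i)+\bbe_{j_0}$, a vertex of $\Pi_i$ by Lemma~\ref{lem: lattice-points-in-fine-mixed-cell}; letting $j_0$ range over $[n]$ and taking convex hulls gives $RD(H_i)+\Delta_{[n]}\subseteq\Pi_i\subseteq P_G(1,\dots,1)$. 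The assignment is injective: if $\Pi_i\neq\Pi_j$ but $RD(H_i)=RD(H_j)=y$, then $y+\Delta_{[n]}\subseteq\Pi_i\cap\Pi_j$, while this intersection is a common face of two distinct cells of a subdivision and so has dimension below $n-1=\dim(y+\Delta_{[n]})$, a contradiction. For surjectivity, given $y\in(P_G(1,\dots,1)-\Delta_{[n]})\cap\Z^n$ one has $y+\Delta_{[n]}\subseteq\bigcup_i\Pi_i$, and the key point — \cite[Lemma 12.6]{Postnikov2005}, via the Cayley-trick description of $\calC$ — is that this lattice translate of $\Delta_{[n]}$ lies in a single cell $\Pi_i$; since every lattice point of $\Pi_i=P_{H_i}(1,\dots,1)$ is a vertex with each coordinate at most the corresponding right-degree of $H_i$ and coordinate-sum $m$, the constraints $y+\bbe_{j_0}\in\Pi_i$ for all $j_0$ then force $y=RD(H_i)$.

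\emph{Part (\ref{itm: left-deg-g-draconion}) and the main obstacle.} For the assignment $\Pi_i\mapsto LD(H_i)$, the coordinates of $LD(H_i)$ sum to $|E(H_i)|-m=n-1$ and are nonnegative since $H_i$ is spanning; moreover, for every nonempty $J\subseteq[m]$ the edges of $H_i$ incident to $\{\ell_k : k\in J\}$ form a nonempty forest on at most $|J|+|\bigcup_{k\in J}I_k|$ vertices, hence number at most $|J|+|\bigcup_{k\in J}I_k|-1$, which rearranges to $\sum_{k\in J}(\deg_{H_i}(\ell_k)-1)\leq|\bigcup_{k\in J}I_k|-1$; thus $LD(H_i)\in D(G)$. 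That this assignment is a bijection onto $D(G)$ is \cite[Lemma 12.8 and Theorem 12.9]{Postnikov2005}; alternatively, once part (\ref{itm: right-deg-lattice-point}) and the separate identity $|(P_G(1,\dots,1)-\Delta_{[n]})\cap\Z^n|=|D(G)|$ are in hand, distinctness of the $LD(H_i)$ plus a cardinality count gives bijectivity. The routine ingredients are the dimension count and the two forward inclusions/inequalities above; the genuine obstacle — and the reason one ultimately invokes \cite{Postnikov2005} — is surjectivity, i.e.\ that every lattice point of $P_G(1,\dots,1)-\Delta_{[n]}$ and every $G$-draconian sequence is realized by a cell of the \emph{given, arbitrary} fine mixed subdivision; the crucial geometric fact behind it is that a lattice translate of $\Delta_{[n]}$ contained in $P_G(1,\dots,1)$ must lie within one cell of $\calC$, which rests on the combinatorics of fine mixed subdivisions, equivalently triangulations of the Cayley polytope of $G$.
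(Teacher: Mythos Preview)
The paper does not prove this lemma at all: it is stated as a direct citation of \cite[Lemma 12.6, Lemma 12.8, and Theorem 12.9]{Postnikov2005}, with no argument supplied. Your proposal goes further than the paper by actually sketching the proof, and the sketch is sound --- the dimension/edge count in part~(\ref{itm: fine-mixed-cells-trees}), the rooting argument and coordinate bounds in part~(\ref{itm: right-deg-lattice-point}), and the forest inequality in part~(\ref{itm: left-deg-g-draconion}) are all correct --- while correctly flagging that the surjectivity step (a lattice translate of $\Delta_{[n]}$ inside $P_G(1,\dots,1)$ must sit in a single cell of $\calC$) is the non-routine input one takes from Postnikov. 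So your approach is not different from the paper's; it is simply an unpacking of the results the paper cites.
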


The following result is an immediate consequence of Lemma \ref{lem: fine-mixed-subdivision-trees}. 

\begin{cor}[\cite{Postnikov2005}]\label{cor: lattice-typeA-Gdraconian-fine-mixed-cell} Suppose that $P_G(1,\dots, 1)$ is $(n-1)$-dimensional. Let $\calC$ be a fine mixed subdivision of $P_G(1,\dots, 1).$ Then, 
\[|(P_G(1, \dots, 1) - \Delta_{[n]})\cap \Z^n| = |\calC| = |D(G)|.\]
\end{cor}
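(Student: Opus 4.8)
The plan is to deduce the corollary directly from Lemma~\ref{lem: fine-mixed-subdivision-trees}, treating the two equalities separately. First I would observe that the hypothesis we actually want should guarantee that $P_G(1,\dots,1)$ is $(n-1)$-dimensional, so that Lemma~\ref{lem: fine-mixed-subdivision-trees} applies; if the corollary is stated for a general $G$, I would first reduce to the full-dimensional case by noting (via Remark~\ref{rem: full-dim-typeA-G-connected} and restricting to the affine span of $P_G(1,\dots,1)$) that both $|D(G)|$ and the number of cells are computed inside the appropriate lattice and are unchanged. Assuming full-dimensionality, apply Lemma~\ref{lem: fine-mixed-subdivision-trees} to the given fine mixed subdivision $\calC = \{\Pi_1, \dots, \Pi_p\}$ to obtain spanning trees $H_1, \dots, H_p$ of $G$ with the three listed properties.

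Next I would chase the cardinalities. By definition $|\calC| = p$. Property~(\ref{itm: right-deg-lattice-point}) says the map $i \mapsto RD(H_i)$ is a bijection from $[p]$ onto $(P_G(1,\dots,1) - \Delta_{[n]}) \cap \Z^n$, hence
\[
\bigl|(P_G(1,\dots,1) - \Delta_{[n]}) \cap \Z^n\bigr| = p = |\calC|.
\]
Similarly, Property~(\ref{itm: left-deg-g-draconion}) says $i \mapsto LD(H_i)$ is a bijection from $[p]$ onto $D(G)$, so $|D(G)| = p = |\calC|$. Stringing these together gives the claimed chain of equalities. The key point making each of these a genuine bijection (and not merely a surjection) is the distinctness clause in each property, which is precisely what Lemma~\ref{lem: fine-mixed-subdivision-trees} provides.

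The main obstacle is really not in this corollary at all — it is entirely front-loaded into Lemma~\ref{lem: fine-mixed-subdivision-trees}, which does the heavy lifting of matching cells of the subdivision to spanning trees and identifying their left/right degree vectors with $G$-draconian sequences and interior lattice points respectively. Given that lemma, the corollary is a two-line bijective bookkeeping argument. The only subtlety I would be careful about is the dimensionality hypothesis: one should either state the corollary under the assumption that $P_G(1,\dots,1)$ is $(n-1)$-dimensional (inherited from $\calC$ being a subdivision into full-dimensional cells, each of the form $P_{H_i}(1,\dots,1)$ for a spanning tree), or include the brief reduction above. I would also remark that the number $|\calC|$ is independent of the chosen fine mixed subdivision, which is an immediate byproduct since both outer terms in the equality do not reference $\calC$.
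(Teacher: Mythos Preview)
Your proposal is correct and matches the paper's approach exactly: the paper states that the corollary is ``an immediate consequence'' of Lemma~\ref{lem: fine-mixed-subdivision-trees}, and your argument spells out precisely the bijective bookkeeping (via the distinctness of $RD(H_i)$ and $LD(H_i)$) that makes this immediate. Your attention to the $(n-1)$-dimensionality hypothesis is a fair caveat, since Lemma~\ref{lem: fine-mixed-subdivision-trees} assumes it explicitly while the corollary does not, but this does not alter the substance of the deduction.
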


We illustrate the results of both Lemma \ref{lem: fine-mixed-subdivision-trees} and Corollary \ref{cor: lattice-typeA-Gdraconian-fine-mixed-cell} with the polytope shown in Figure \ref{fig: fineMixedCell-type-a} through the following example.

\begin{figure}[h!]
    \centering
    \captionsetup{justification=centering}
    \includegraphics[width=0.92\linewidth]{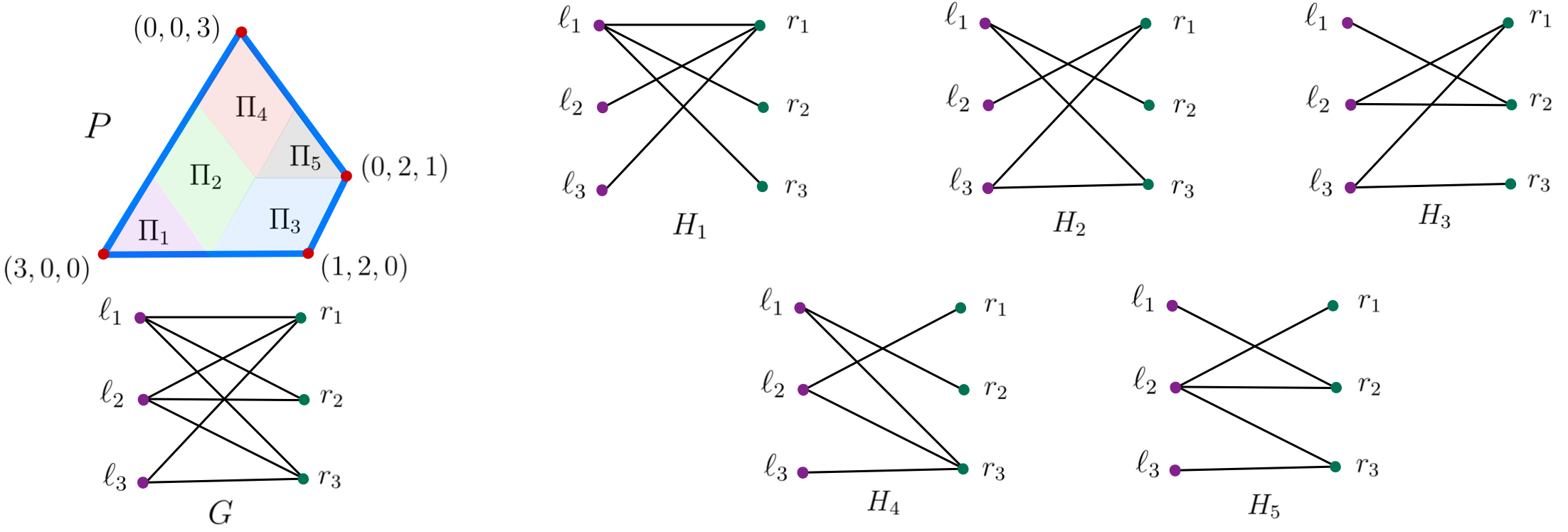}
    \caption{Fine mixed cells in a fine mixed subdivision of $P_G(1,1,1)$ and their corresponding spanning trees}
    \label{fig: fineMixedCell-type-a}
\end{figure}

\begin{ex}\label{ex: Gfine-mixed-subdivision-trees}
    Let $P := P_G(1,1,1)$ be the polytope in Figure \ref{fig: fineMixedCell-type-a}. Since $P - \Delta_{[n]} = P_3$ where $P_3$ is the polytope shown in Figure \ref{fig: Minkowski-sums-type-a}, one has $$(P_G(1,\dots, 1) - \Delta_{[n]})\cap\Z^n = \{(2,0,0), (1,0,1), (1,1,0), (0,0,2), (0,1,1)\}.$$ One can also show that $D(G) =\{(2,0,0), (1,0,1), (0,1,1), (1,1,0), (0,2,0)\}.$ A further direct computation shows that
    \begin{align*}
        (P_G(1,\dots, 1) - \Delta_{[n]})\cap\Z^n &= \{RD(H_1), RD(H_2), RD(H_3), RD(H_4), RD(H_5)\}\\
        D(G) &= \{LD(H_1), LD(H_2), LD(H_3), LD(H_4), LD(H_5)\}.
    \end{align*}
    Moreover, by letting $\calC$ be the fine mixed subdivision of $P$ shown in Figure \ref{fig: fineMixedCell-type-a}, one sees that $|(P_G(1,1, 1) - \Delta_{[n]})\cap\Z^n| = |\calC|= |D(G)| = 5.$
\end{ex}

Corollary \ref{cor: lattice-typeA-Gdraconian-fine-mixed-cell} implies that the number of lattice points in $P_G-\Delta_{[n]}$ is given by
\begin{align}\label{eq: lattice-points-Gdraconians}
    |(P_G-\Delta_{[n]})\cap \Z^n| = \sum_{\bba \in D(G)}1 = \sum_{\bba \in D(G)}\binom{1+ a_1 -1}{a_1}\cdots\binom{1+ a_m -1}{a_m}
\end{align}
Postnikov obtains the following key lemma by expressing $P_G(y_1, \dots, y_m)$ as
\[P_{G'}(1,\dots,1) = \underbrace{\Delta_{I_1} + \cdots +\Delta_{I_1}}_{y_1 \text{ terms }} + \cdots + \underbrace{\Delta_{I_m} + \cdots + \Delta_{I_m}}_{y_m \text{ terms }},\]
for an appropriate $G'$, and applying a simple binomial identity to the right-hand side of  \eqref{eq: lattice-points-Gdraconians}.

\begin{lem}\label{lem: lattice-type-A} Suppose that $P_G(y_1, \dots, y_m) = y_1\Delta_{I_{1}} + \cdots + y_m\Delta_{I_{m}}$ where $y_i$ are integers for all $i \in [m]$. Then, the number of lattice points in $P_G(y_1, \dots, y_m)-\Delta_{[n]}$ is given by
\begin{align}\label{eq: lattice-type-A}
    |(P_G(y_1, \dots, y_m)-\Delta_{[n]})\cap \Z^n| = \sum_{\bba \in D(G)}\binom{y_1+ a_1 -1}{a_1}\cdots\binom{y_m+ a_m -1}{a_m}.
\end{align}
\end{lem}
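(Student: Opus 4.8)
The plan is to reduce the statement about general positive (indeed arbitrary) integer coefficients $y_1, \dots, y_m$ to the already-established case of all coefficients equal to $1$, namely Corollary \ref{cor: lattice-typeA-Gdraconian-fine-mixed-cell} and its numerical form \eqref{eq: lattice-points-Gdraconians}. The key observation, already hinted at in the excerpt, is that a positive integer multiple $y_i \Delta_{I_i}$ is the Minkowski sum of $y_i$ copies of $\Delta_{I_i}$, so that $P_G(y_1, \dots, y_m)$ equals $P_{G'}(1, \dots, 1)$ where $G'$ is the bipartite graph obtained from $G$ by replicating the left vertex $\ell_i$ (together with its incident edges) $y_i$ times, for each $i \in [m]$. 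Thus $G'$ has $y_1 + \cdots + y_m$ left vertices, and the neighbor set of each copy of $\ell_i$ is still $I_i$.

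\textbf{Main steps.} First I would record the identification $P_G(y_1, \dots, y_m) = P_{G'}(1,\dots,1)$ and note that we may assume each $y_i \geq 1$ (if some $y_i \le 0$ the statement is handled by first treating the positive part and then invoking \cite[Remark 6.4]{Postnikov2005} / the deformation reduction already cited in the text; alternatively one restricts to the stated positive-integer case since that suffices to determine the polynomial). Second, apply \eqref{eq: lattice-points-Gdraconians} to $G'$: the number of lattice points in $P_{G'}(1,\dots,1) - \Delta_{[n]}$ is $|D(G')|$, and more usefully it equals $\sum_{\bbb \in D(G')} 1$, where $\bbb = (b_1, \dots, b_{y_1+\cdots+y_m})$ ranges over $G'$-draconian sequences indexed by the left vertices of $G'$. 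Third, set up the combinatorial bijection: group the coordinates of $\bbb$ into $m$ blocks, the $i$-th block consisting of the $y_i$ coordinates corresponding to the copies of $\ell_i$, and let $a_i$ be the sum of the entries in block $i$. One checks that $(a_1, \dots, a_m) \in D(G)$: the condition $\sum a_i = n-1$ is immediate since $\sum b_k = n-1$, and the draconian inequality for a subset $J \subseteq [m]$ follows from the draconian inequality for $G'$ applied to the union of the corresponding blocks, because $\bigcup_{i \in J} I_i$ is exactly the union of the neighbor sets of the left vertices of $G'$ in those blocks. Conversely, every $(a_1, \dots, a_m) \in D(G)$ arises, and the number of $\bbb \in D(G')$ mapping to a fixed $(a_1,\dots,a_m)$ is exactly the number of ways to write each $a_i$ as an ordered sum of $y_i$ nonnegative integers, i.e. $\binom{y_i + a_i - 1}{a_i}$, since any such distribution automatically satisfies all the draconian inequalities for $G'$ (each block's partial sums are bounded by $a_i \le |\bigcup_{i'} I_{i'}| - 1$ appropriately, and subsets crossing blocks reduce to the $G$-inequality). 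Fourth, multiply over $i$ and sum over $(a_1, \dots, a_m) \in D(G)$ to obtain \eqref{eq: lattice-type-A}.

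\textbf{Anticipated obstacle.} The delicate point is verifying that the fibers of the block-sum map $D(G') \to D(G)$ are precisely the full product of ``compositions'' $\prod_i \{\text{weak compositions of } a_i \text{ into } y_i \text{ parts}\}$ with no further draconian constraints lost or gained — in other words, that a tuple $\bbb$ lies in $D(G')$ if and only if its block sums lie in $D(G)$. One direction (block sums of a $G'$-draconian sequence are $G$-draconian) is the straightforward aggregation above. The reverse — that \emph{any} redistribution within blocks of a $G$-draconian $(a_i)$ yields a $G'$-draconian $\bbb$ — requires checking the inequality $\sum_{k \in K} b_k \le |\bigcup_{k \in K} I'_k| - 1$ for an arbitrary subset $K$ of left vertices of $G'$; since replicated copies of $\ell_i$ have identical neighbor sets, $|\bigcup_{k \in K} I'_k| = |\bigcup_{i : K \text{ meets block } i} I_i|$ depends only on which blocks $K$ meets, while $\sum_{k \in K} b_k \le \sum_{i : K \text{ meets block } i} a_i$, so the $G'$-inequality for $K$ follows from the $G$-inequality for $\{i : K \text{ meets block } i\}$. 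This is exactly the binomial-identity manipulation alluded to right before the lemma, so carrying it out in full is routine once the bijection is phrased correctly; the care needed is purely bookkeeping of indices across the replication $G \leadsto G'$.
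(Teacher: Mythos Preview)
Your proposal is correct and follows essentially the same approach the paper sketches just before the lemma: replace $P_G(y_1,\dots,y_m)$ by $P_{G'}(1,\dots,1)$ where $G'$ replicates each left vertex $\ell_i$ exactly $y_i$ times, apply \eqref{eq: lattice-points-Gdraconians} to $G'$, and then collapse the sum over $D(G')$ to a sum over $D(G)$ via the block-sum map, whose fibers are counted by the stars-and-bars identity $\binom{y_i+a_i-1}{a_i}$. Your anticipated obstacle---that every redistribution within blocks of a $G$-draconian sequence is automatically $G'$-draconian because replicated left vertices share neighbor sets---is exactly the ``simple binomial identity'' step the paper alludes to, and your argument for it is correct.
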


\subsection{Type B generalized Permutohedra}

A \emph{type B generalized permutohedron} in $\R^n$ is a polytope whose edges are parallel to $\bbe_i+\bbe_j, \bbe_i-\bbe_j,$ or $\bbe_i$ for some $i,j \in [n]$ where $\bbe_i$ denotes the standard basis vector in $\R^n$. The name ``type B'' comes from the fact that the edge directions of these polytopes are parallel to some vectors in  type B positive root system $\{\bbe_1, \dots, \bbe_n, \bbe_i\pm\bbe_j  \ | \ 1 \leq i < j \leq n\}$. We note that these polytopes are also known as \emph{generalized signed permutohedra} (see \cite{Euretall2024}). By definition, every type A generalized permutohedron is a type B generalized permutohedron. Other notable examples in the family include parking function polytopes \cite{Amanbayeva_2021}\cite{LiuThawinrak2025} and polymatroids \cite{edmonds1970}\cite[Theorem 17.1]{ Fujishige2005submodular}.

\begin{figure}[h!]
    \centering
    \includegraphics[width=1\linewidth]{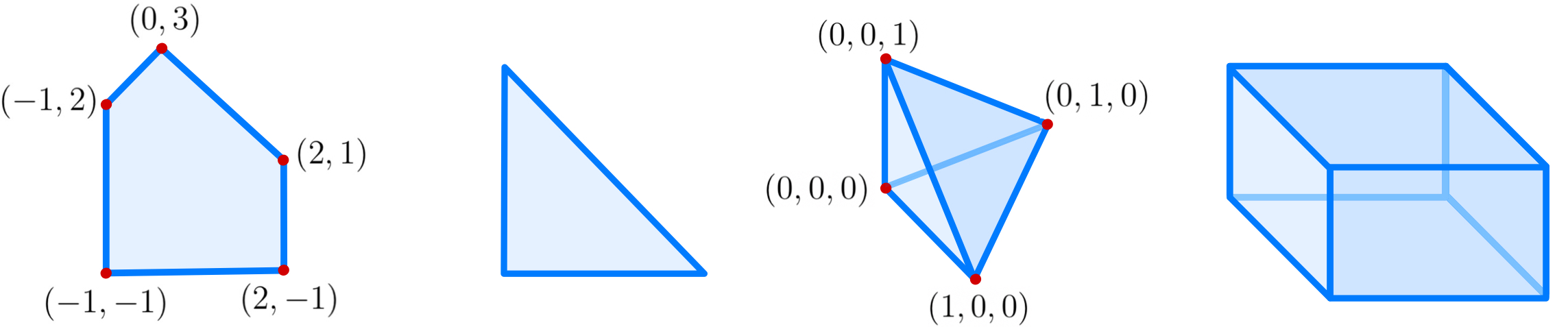}
    \caption{Examples of type B generalized permutohedra in $\R^2$ and $\R^3$}
    \label{fig: type-b}
\end{figure}

One may equivalently define a type B generalized permutohedron in $\R^n$ as a polytope in which its normal fan coarsens the fan whose full dimensional cones are defined by the chambers of the arrangement of hyperplanes
\begin{align}\label{eq: B-n-fan}\begin{cases}
    H^+_{i,j} &:= \{ (c_1, \dots, c_n) \in \R^n \ | \ c_i + c_j =0\} \text{ for all } i\neq j \in [n],\\
    H^-_{i,j} &:= \{ (c_1, \dots, c_n) \in \R^n \ | \ c_i - c_j =0\} \text{ for all } i\neq j \in [n], \text{ and}\\
    H_i &:= \{ (c_1, \dots, c_n) \in \R^n \ | \ c_i =0\} \text{ for all } i \in [n].
\end{cases}
\end{align}
The arrangement of hyperplanes in equation \eqref{eq: B-n-fan} is called the \emph{type B Coxeter arrangement}. The fan whose full dimensional cones are defined by the chambers of the type B Coxeter arrangement is known as the \emph{$B_n$ permutohedral fan} and is denoted by $\Sigma_{B_n}.$ We refer the reader to \cite{Bastidas2021} and \cite{Euretall2024} for more details regarding type B generalized permutohedra beyond what are presented here.
    
Let $[\Bar{n}]:= \{\Bar{1}, \dots, \Bar{n}\}$ and $\nn := [n] \sqcup [\Bar{n}] = \{1, \dots, n, \Bar{1}, \dots, \Bar{n}\}.$ For $i \in [n]$, we define $\bbe_{\Bar{i}} := -\bbe_i$ where $\bbe_i$ is a standard basis vector of $\R^n.$ 

\begin{defn}
The set $\ads$ of \emph{admissible subsets} of $\nn$ is defined to be 
\[\ads := \{S \subset \nn \,|\, S\neq \emptyset \text{ and } \{i, \Bar{i}\} \not\subset S \text{ for all } i\} \text{ and } \ads_n := \{S \in \ads\,|\, |S| = n\}.\]
\end{defn} 

For $T \in \ads_n$, we define $\R_T := \{\bbx \in \R^n \ | \ \bbx\cdot e_i \geq 0 \text{ for all } i \in T\}$ to be the associated orthant to $T$ of $\R^n.$ For instance, when $T = \{1,\dots, n\}$, we obtain $\R_T = \R^n_{\geq 0}$ the first orthant, whereas when $T = \{\Bar{1},\dots, \Bar{n}\}$, we obtain the opposite orthant $\R_T = \R^n_{\leq 0}$.

\begin{defn}
    Let $S \in \ads$ and $T \in \ads_n$. We denote by
    \begin{enumerate}
        \item $\sym_S$ the simplex $\conv(\mathbf{0},\bbe_i\ |\ i \in S)$.
        \item $\square_T$ the unit cube $\sum_{i \in T}\sym_{\{i\}}$ in the orthant $\R_T.$
    \end{enumerate}

\end{defn}

Bastidas showed from a study of Tits algebras in \cite{Bastidas2021} that every type B generalized permutohedron can be written as a Minkowski sum (and difference) of the simplices $\Delta_S^0$. This result was also established later in \cite{Euretall2024} by Eur, Fink, Larson, and Spink from a study of delta-matroids.

\begin{lem}[\cite{Bastidas2021}, \cite{Euretall2024}]\label{lem: type-B-Minkowski-sum}
    For every type B generalized permutohedron $P \subset \R^n$, there exists a partition of $\ads$ into two disjoint subsets $M_1$ and $M_1$ such that
\begin{align}\label{eq: typeB-Minkowski-sum}
    P + \sum_{S \in M_1}y_S\Delta^0_S = \sum_{S \in M_2}y_S\Delta^0_S
\end{align}
for some $y_S > 0$ for all $I \in M_1$ and some $y_S \geq 0$ for all $I \in M_2.$ Moreover, the coefficients $y_S$ in \eqref{eq: typeB-Minkowski-sum} are unique. In particular, there exists a sequence of real numbers $(y_S)_{S \in \ads}$ such that 
\begin{align}\label{eq: typeB-Minkowski-sum-2}
    P = \sum_{S \in \ads} y_S\Delta^0_S.
\end{align}
\end{lem}

One sees that Lemma \ref{lem: type-B-Minkowski-sum} generalizes to type B the type A result by Postnikov in Lemma \ref{lem: type-A-Minkowski-sum}. Thus, one may write every type B generalized permutohedron as $P(\{y_S\})$. If $P(\{y_S\})$ is integral, then it can be shown that the coefficients $y_S$ are integers for all $S \in \ads$.

It is also important to note that whenever we write a type $B$ generalized permutohedron as in \eqref{eq: typeB-Minkowski-sum-2}, the coefficients $y_S$ are the unique real numbers given in \eqref{eq: typeB-Minkowski-sum}.

\begin{ex}\label{ex: Minkowski-sums-type-b}
    In Figure \ref{fig: Minkowski-sums-type-b}, the type B generalized permutohedra $P_1$ and $P_2$ are written as $P_1 = \sym_{[2]} + \sym_{[2]} + \sym_{\{\Bar{1},2\}} + \sym_{\{\Bar{2}\}} \text{ and } P_2 = P_1 -  \square_{[2]}= 2\sym_{[2]} +\sym_{\{\Bar{1},2\}} + \sym_{\{\Bar{2}\}} - \sym_{\{1\}} - \sym_{\{2\}}.$
\end{ex}

\begin{figure}
    \centering
    \includegraphics[width=1\linewidth]{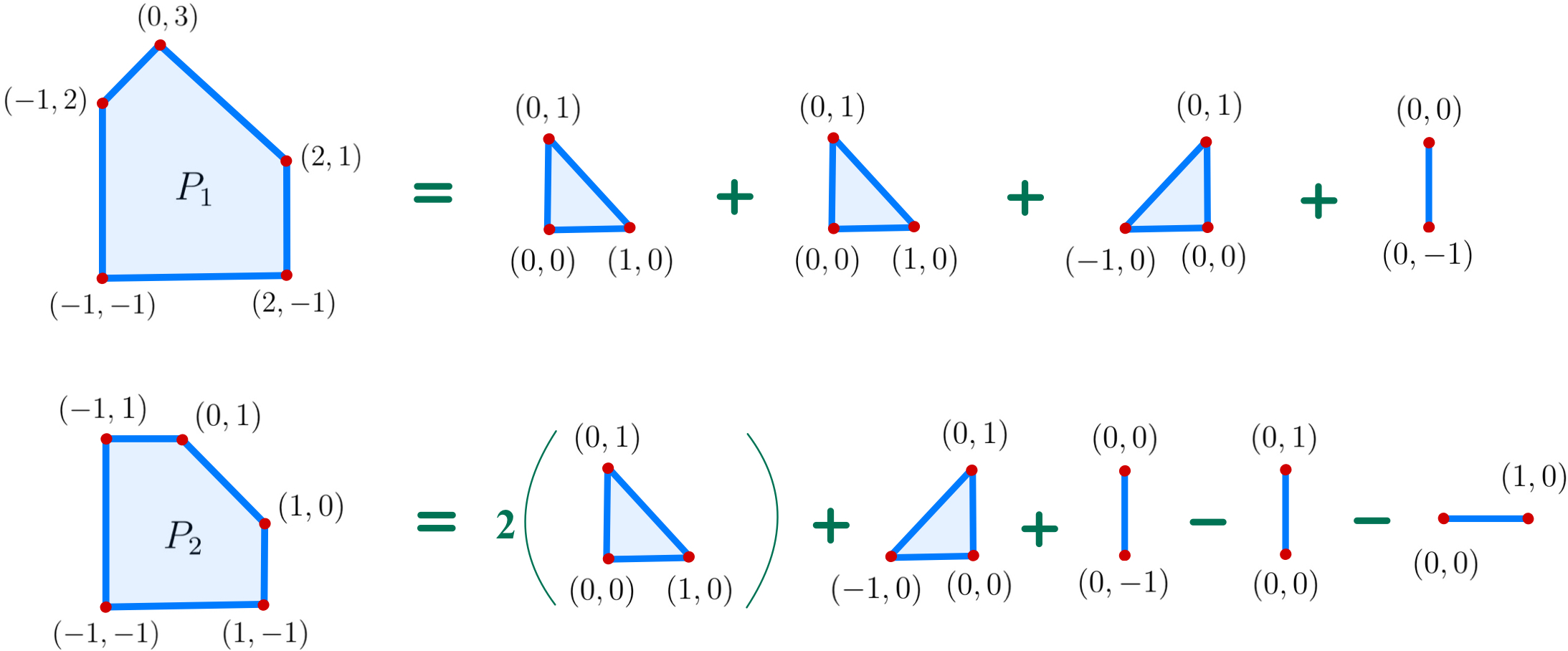}
    \caption{Type-$B$ generalized permutohedra as Minkowski sums of simplices}
    \label{fig: Minkowski-sums-type-b}
\end{figure}

\begin{defn}\label{def: sign-transversals} Let $S_1, \dots, S_n$ be admissible subsets of $[n,\Bar{n}].$ The \emph{signed transversal} of $(S_1, \dots, S_n)$ is an admissible subset $T \in \ads_n$ such that there exists a bijection $g:[n] \longrightarrow T$ satisfying $g(i) \in S_i$ for all $i \in [n].$
\end{defn}

In \cite{Euretall2024}, Eur, Fink, Larson, and Spink give formulas for the volume and the number of lattice points of type B generalized permutohedra in terms of signed transversals as follows.

\begin{lem}[\cite{Euretall2024}]\label{lem: typeB-Vol-Ehrhart} Suppose that $P(\{y_S\}) = \sum_{S \in \ads} y_S\Delta^0_S$ where $y_S \in \Z$ for all $S \in \ads$. Then, 
    \begin{align}
    \nvol(P(\{y_S\})) &= \sum_{(S_1, \dots, S_n)}|\text{signed transversals of } (S_1, \dots, S_n)|\cdot y_{S_1}\cdots y_{S_n}, \text{ and }\nonumber\\
    \label{eq: typeB-Ehrhart-Eur}
        |(P(\{y_S\})-\square_{[n]})\cap \Z^n| &= \sum_{(S_1, \dots, S_n)}|\text{signed transversals of } (S_1, \dots, S_n)|\cdot \Psi\left(y_{S_1}\cdots y_{S_n} \right) 
    \end{align}
    where $\Psi$ is the linear operator on the set of polynomials that maps each monomial $x^{a_1}_1\cdots x^{a_m}_m$ to $\frac{a_1!\cdots a_m!}{(a_1 + \cdots + a_m)!}\binom{x_1}{a_1}\cdots \binom{x_m}{a_m}$.
\end{lem}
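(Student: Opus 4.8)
The plan is to reduce the type B statement to the type A machinery already developed in the excerpt by decomposing a type B generalized permutohedron octant-by-octant. The starting observation is that $\Sigma_{B_n}$ refines the coordinate-hyperplane arrangement $\{H_i\}$, so for each octant $\R_T$ with $T \in \ads_n$ the intersection $P \cap \R_T$ is a polytope whose normal fan coarsens the restriction of $\Sigma_{B_n}$ to $\R_T$; under the linear map sending $\bbe_i \mapsto \bbe_i$ for $i \in T$ (i.e. flipping signs of the coordinates not in $T$), this restricted fan becomes the braid fan $\Sigma_{A_{n-1}}$ shifted, so $P \cap \R_T$ is integrally equivalent to a type A generalized permutohedron. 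More precisely, I would show that if $P = \sum_{S \in \ads} y_S \Delta^0_S$, then $P \cap \R_T$ corresponds, after the sign flip, to a type A object of the form $P_{G_T}(\cdots)$ where the bipartite graph $G_T$ records exactly those admissible sets $S$ compatible with the octant $T$ together with the cube contribution $\square_T$. The key bookkeeping step is to match $\Delta^0_S = \conv(\0, \bbe_i : i \in S)$ inside the octant with a simplex $\Delta_{I}$ (on $n{+}1$ ground-set elements, the extra one tracking the vertex $\0$) so that Lemma~\ref{lem: type-A-Minkowski-sum}, Lemma~\ref{lem: lattice-type-A}, and Corollary~\ref{cor: lattice-typeA-Gdraconian-fine-mixed-cell} apply verbatim to each piece.

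First I would make precise the cube-translation identity: subtracting $\square_{[n]}$ from $P$ corresponds, in each octant $\R_T$, to subtracting $\square_T$, and $\square_T = \sum_{i\in T}\Delta^0_{\{i\}}$ is itself a Minkowski sum of the relevant one-element simplices, so in the type A translation it plays precisely the role of the $\Delta_{[n]}$-shift appearing in \eqref{eq: lattice-type-A} (this is why the left-hand side of \eqref{eq: typeB-Ehrhart-Eur} features $P - \square_{[n]}$). Next I would verify that the lattice points of $P - \square_{[n]}$ are partitioned (up to lower-dimensional overlap that contributes no new points, handled by the standard inclusion pattern Postnikov uses) by the octants, so that $|(P-\square_{[n]})\cap\Z^n| = \sum_{T\in\ads_n}|(\,(P-\square_{[n]})\cap\R_T\,)\cap\Z^n|$. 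Applying Lemma~\ref{lem: lattice-type-A} to each octant expresses the $T$-term as a sum over $D(G_T)$ of products of binomial coefficients $\binom{y_{S}+a_S-1}{a_S}$. The final step is a combinatorial identity: I would show that summing the $G_T$-draconian generating functions over all octants $T$ reorganizes exactly into the right-hand side of \eqref{eq: typeB-Ehrhart-Eur}, with the coefficient $|\text{sign transversals of }(S_1,\dots,S_n)|$ emerging as the count of octant/draconian-sequence pairs realizing a given multiset of chosen admissible sets — this is the type B analogue of the transversal interpretation in Remark~\ref{rem: integers-in-fine-mixed-cells} and Lemma~\ref{lem: fine-mixed-subdivision-trees}\eqref{itm: left-deg-g-draconion}. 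The volume formula then follows by taking leading terms (equivalently, specializing the Ehrhart polynomial), since $\Psi$ sends a degree-$n$ monomial to a polynomial whose leading coefficient recovers $1/n!$ times the monomial.

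I expect the main obstacle to be the octant decomposition being clean at the level of lattice-point counts: a priori the pieces $P\cap\R_T$ share boundary faces lying on coordinate hyperplanes, and a lattice point on such a shared boundary would be counted in several octants. The resolution is to work with $P - \square_{[n]}$ rather than $P$ itself — the cube subtraction "opens up" the polytope away from the coordinate hyperplanes in the appropriate directions, so that (after the translation) the relevant points are interior to a unique octant, mirroring how Postnikov's $-\Delta_{[n]}$ shift makes the fine mixed subdivision's cells correspond bijectively to lattice points rather than overcounting. Making this rigorous — showing the decomposition $\bigsqcup_T (P-\square_{[n]})\cap\R_T^{\circ}$ captures every lattice point of $P-\square_{[n]}$ exactly once, or else carefully inclusion–excluding the overlaps and checking they vanish — is where the real work lies; once that is in place, the identification of each octant with a type A generalized permutohedron and the invocation of Lemma~\ref{lem: lattice-type-A} are essentially formal, and the concluding identity matching $\sum_T |D(G_T)|$-type sums to sign-transversal counts is a direct unwinding of definitions.
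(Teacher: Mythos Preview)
The paper does not prove this lemma at all: it is quoted from \cite{Euretall2024} as background in the preliminaries, and the paper's own contribution is the \emph{alternative} formula of Corollary~\ref{cor: ehrhart-typeb}, whose relationship to \eqref{eq: typeB-Ehrhart-Eur} is only commented on in Remark~\ref{rem: formula-compared-to-Eurs}. So strictly speaking there is no ``paper's own proof'' to compare against. That said, your octant-by-octant plan is exactly the strategy the paper uses to reach its own formula, so it is worth flagging where your sketch diverges from what actually works.

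The substantive gap is your claim that, under the integral equivalence of each octant piece with a type~A generalized permutohedron, the cube $\square_T = \sum_{i\in T}\Delta^0_{\{i\}}$ ``plays precisely the role of the $\Delta_{[n]}$-shift appearing in \eqref{eq: lattice-type-A}.'' It does not. Under the map of Lemma~\ref{lem: biject-to-type-A}, each $\Delta^0_{\{i\}}$ becomes the edge $\Delta_{\{0,i\}}$, so $\square_T$ corresponds to the zonotope $\sum_{i}\Delta_{\{0,i\}}$, not to the simplex $\Delta_{[0,n]}$. Consequently Lemma~\ref{lem: lattice-type-A} does not apply to $(P_T - \square_T)$ as written, and the binomials you obtain, $\binom{y_S + a_S - 1}{a_S}$, are the wrong ones: both \eqref{eq: typeB-Ehrhart-Eur} and the paper's Corollary~\ref{cor: ehrhart-typeb} involve $\binom{y_S}{a_S}$. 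This mismatch is not cosmetic; bridging it is precisely why the paper develops new machinery (Lemma~\ref{lem: fine-mixed-subdivision-latticepts} and Lemma~\ref{lem: zonotope-equivalence}) identifying the lattice points of $P_T - \square_T$ with the \emph{zonotope} fine mixed cells, i.e.\ with those $G$-draconian sequences having all $a_i \le 1$, rather than with all cells as in Corollary~\ref{cor: lattice-typeA-Gdraconian-fine-mixed-cell}. Without that replacement for Lemma~\ref{lem: lattice-type-A}, your final ``direct unwinding'' cannot produce the correct sign-transversal coefficients.
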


\section{Thinking of B from A}

We begin by outlining some key properties of certain type $B$ generalized permutohedra and then demonstrate how they can be realized as type $A$ generalized permutohedra.

The set of vertices of $\sym_S$ is $\{\bbe_j\ | \ j \in S\}\cup \{\mathbf{0}\}$. Thus, one can express every point $\bbx \in \sym_S$ as
    \begin{equation}\label{eq: point_in_Delta_S}
        \bbx = \sum_{j \in S}\lambda_j \bbe_j \text{ for some real number }  \lambda_j \geq 0 \text{ such that } \sum_{j \in S}\lambda_j \leq 1.
    \end{equation}

\begin{rem}\label{rem: separation_by_T}
    For $S, T \in \ads,$ let $\bbx \in \sym_S$.  It is easy to see from equation \eqref{eq: point_in_Delta_S} that we can write $\bbx = \bba + \bbb$ for some $\bba \in \sym_{S\cap T}$ and $\bbb \in \sym_{S\backslash T}.$ Thus, if $\bby \in P = y_1\Delta^0_{S_1} + \cdots + y_m\Delta^0_{S_m}$ where $y_i > 0$ for all $i \in [m]$, then we have $\bby = \bbu + \bbv$ where $\bbu \in y_1\Delta^0_{S_1\cap T} + \cdots + y_m\Delta^0_{S_m\cap T}$ and $\bbv \in y_1\Delta^0_{S_1\backslash T} + \cdots + y_m\Delta^0_{S_m \backslash T}$.
\end{rem}

\begin{lem}\label{lem: inP}
    Let $P = y_1\Delta^0_{S_1} + \cdots + y_m\Delta^0_{S_m}$ where $y_i$ are positive real numbers and $S_i \in \ads$ for all $i \in [m]$. If the point $(x_1, \dots, x_n)$ lies in $P$, then, for every $(r_1, \dots, r_n) \in \R^n$ such that $0 \leq r_j \leq 1$ for all $j \in [n]$, the point $(r_1x_1, \dots, r_nx_n)$ also lies in $P$.
\end{lem}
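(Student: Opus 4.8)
The plan is to exploit the Minkowski-sum presentation of $P$ together with the explicit vertex description of each summand in equation \eqref{eq: point_in_Delta_S}, reducing everything to the trivial observation that shrinking one barycentric coefficient of a point keeps it inside a dilated simplex. It suffices to treat the case where a single coordinate is scaled: since $(r_1x_1,\dots,r_nx_n)$ is obtained from $(x_1,\dots,x_n)$ by successively multiplying the $k$-th coordinate by $r_k$ for $k=1,\dots,n$, it is enough to show that if $\bbx=(x_1,\dots,x_n)\in P$ and $0\le r\le 1$, then the point $\bbx'$ obtained from $\bbx$ by replacing $x_k$ with $rx_k$ also lies in $P$, for each fixed $k\in[n]$.

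To carry this out, I would first decompose. As every $y_i>0$, the summands $y_i\Delta^0_{S_i}$ are nonempty, so $\bbx=\bbx^{(1)}+\cdots+\bbx^{(m)}$ with $\bbx^{(i)}\in y_i\Delta^0_{S_i}$; by the dilated version of \eqref{eq: point_in_Delta_S} (the vertices of $\Delta^0_{S_i}$ being $\mathbf{0}$ and $\bbe_j$ for $j\in S_i$) we may write $\bbx^{(i)}=\sum_{j\in S_i}\lambda^{(i)}_j\bbe_j$ with $\lambda^{(i)}_j\ge 0$ and $\sum_{j\in S_i}\lambda^{(i)}_j\le y_i$. Since $S_i$ is admissible, at most one of $k,\bar k$ lies in $S_i$, so the $k$-th coordinate of $\bbx^{(i)}$ equals $\lambda^{(i)}_k$ if $k\in S_i$, equals $-\lambda^{(i)}_{\bar k}$ if $\bar k\in S_i$, and equals $0$ otherwise. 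Now form $\tilde\bbx^{(i)}$ by multiplying whichever (if either) of $\lambda^{(i)}_k,\lambda^{(i)}_{\bar k}$ appears by $r$ and leaving the remaining coefficients unchanged. Because $r\ge 0$ the coefficients stay nonnegative, and because $r\le 1$ their sum does not increase, so $\tilde\bbx^{(i)}\in y_i\Delta^0_{S_i}$. Summing, $\tilde\bbx^{(1)}+\cdots+\tilde\bbx^{(m)}$ agrees with $\bbx$ in every coordinate other than the $k$-th, where by the coordinate computation above it equals $rx_k$; hence this sum equals $\bbx'$, so $\bbx'\in P$. Iterating over $k$ then gives the lemma.

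There is essentially no serious obstacle here. The only points demanding care are the bookkeeping with the signed ground set $\nn$ (keeping track that $\bbe_{\bar j}=-\bbe_j$) and the use of admissibility of each $S_i$ to ensure that $k$ and $\bar k$ never both contribute to the $k$-th coordinate of a single summand; once these are pinned down, the claim is immediate from $0\le r\le 1$. (One can equivalently scale all coordinates at once by replacing $\lambda^{(i)}_j$ with $r_{|j|}\lambda^{(i)}_j$ in each summand, where $|j|\in[n]$ denotes the underlying index of $j\in\nn$, and argue identically, bypassing the iteration step.)
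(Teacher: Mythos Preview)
Your proof is correct and follows essentially the same approach as the paper: decompose $\bbx$ as a sum of points from the Minkowski summands $y_i\Delta^0_{S_i}$, scale each summand coordinatewise, and observe via \eqref{eq: point_in_Delta_S} that the scaled summand remains in its simplex. The paper does the all-at-once scaling you describe in your parenthetical (writing $\bbr\cdot\bbz_i$ for the coordinatewise product) and is somewhat terser about the signed-index bookkeeping, but the argument is the same.
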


\begin{proof}
    The statement holds for points in the simplices $y\Delta^0_{S}$ for all $y > 0$ and $S \in \ads$, since $\Delta^0_{S}$ is the set of all points $\bbx$ given in \eqref{eq: point_in_Delta_S}.

    Now consider $\bbx = (x_1, \dots, x_n) \in P.$ Then, $\bbx = \bbz_1 + \cdots + \bbz_m$ for some $\bbz_i \in y_i\sym_{S_i},$ $i\in [m]$. Let $\bbr = (r_1, \dots, r_n) \in \R^n$ satisfy $0 \leq r_j \leq 1$ for all $j \in [n]$. Then, the points $\bbr\cdot\bbz_i$ lie in $y_i\sym_{S_i}$ for all $i \in [m]$. Thus, the point $(r_1x_1, \dots, r_n x_n) = \bbr\cdot \bbx = \bbr\cdot\bbz_1 + \cdots + \bbr\cdot \bbz_m$
    also lies in $P.$
\end{proof}

\begin{defn}
    Let $P = y_1\Delta^0_{S_1} + \cdots + y_m\Delta^0_{S_m}$ where $y_i$ are positive real numbers and $S_i \in \ads$ for all $i \in [m]$. For $T \in \ads_n,$ we define $P_T := y_1\Delta^0_{S_1\cap T} + \cdots + y_m\Delta^0_{S_m\cap T}.$
\end{defn}

\begin{lem}\label{lem: P_T}
    Let $P = y_1\Delta^0_{S_1} + \cdots + y_m\Delta^0_{S_m}$ where $y_i$ are positive real numbers and $S_i \in \ads$ for all $i \in [m]$. Then, for $T \in \ads_n,$ we have $P_T = P \cap \R_T.$ That is, $P_T$ equals the polytope $P$ intersecting with the orthant associated to $T.$
\end{lem}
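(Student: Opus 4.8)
The plan is to prove $P_T = P\cap\R_T$ by establishing the two inclusions separately. The inclusion $P_T\subseteq P\cap\R_T$ is routine: since $S_i\cap T\subseteq S_i$ we have $\sym_{S_i\cap T}\subseteq\sym_{S_i}$, so $P_T\subseteq P$ term by term; and since $\R_T$ is a closed convex cone (hence closed under nonnegative scaling and Minkowski sums), $P_T\subseteq\R_T$ will follow once we note that every vertex of every $\sym_{S_i\cap T}$ lies in $\R_T$. This holds because $\0\in\R_T$ and because for $j\in S_i\cap T\subseteq T$ and any $s\in T$ the dot product $\bbe_j\cdot\bbe_s$ is $0$ or $1$, the value $-1$ being impossible: it would force $\{k,\bar k\}\subseteq T$ for some $k$, contradicting admissibility of $T$.

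For the reverse inclusion $P\cap\R_T\subseteq P_T$, fix $\bbx=(x_1,\dots,x_n)\in P\cap\R_T$ and apply Remark~\ref{rem: separation_by_T} with this $T$ to write $\bbx=\bbu+\bbv$ with $\bbu\in P_T$ and $\bbv\in y_1\sym_{S_1\backslash T}+\cdots+y_m\sym_{S_m\backslash T}$. Expanding $\bbu$ and $\bbv$ as nonnegative combinations of the vectors $\bbe_j$ — where $j$ ranges over $\bigcup_i(S_i\cap T)\subseteq T$ for $\bbu$, and over $\bigcup_i(S_i\backslash T)$, each such $j$ satisfying $j\notin T$, for $\bbv$ — and using that $T\in\ads_n$ contains exactly one of $i$ and $\bar i$ for every $i\in[n]$, one checks coordinatewise that $u_i\ge 0$ and $v_i\le 0$ whenever $i\in T$, while $u_i\le 0$ and $v_i\ge 0$ whenever $\bar i\in T$. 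Combining these sign conditions with the relation $x_i=u_i+v_i$ and with the defining inequalities of $\R_T$ ($x_i\ge 0$ for $i\in T$, $x_i\le 0$ for $\bar i\in T$), one concludes in either case that $x_i$ lies between $0$ and $u_i$, so $x_i=r_iu_i$ for some $r_i\in[0,1]$. Thus $\bbx=(r_1u_1,\dots,r_nu_n)$ with $\bbu\in P_T$ and $0\le r_j\le 1$ for all $j$, and applying Lemma~\ref{lem: inP} to $P_T$ — written as $\sum_{i:\,S_i\cap T\neq\emptyset}y_i\sym_{S_i\cap T}$, since the terms with $S_i\cap T=\emptyset$ are just $\{\0\}$ — yields $\bbx\in P_T$.

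I expect the only delicate point to be the middle step of the second inclusion: extracting the signs of the coordinates of $\bbu$ and $\bbv$ from the admissibility and full size of $T$, and then checking, separately in the cases $i\in T$ and $\bar i\in T$, that passing from $\bbu\in P_T$ to $\bbx$ merely rescales each coordinate by a factor in $[0,1]$ (the bound $x_i\le u_i$ for $i\in T$ comes from $v_i\le 0$, and $u_i\le x_i$ for $\bar i\in T$ from $v_i\ge 0$). Everything else — including confirming that discarding the indices with $S_i\cap T=\emptyset$ leaves $P_T$ in the exact form required by Lemma~\ref{lem: inP} — is bookkeeping.
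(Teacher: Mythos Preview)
Your proof is correct and follows essentially the same approach as the paper: both establish $P_T\subseteq P\cap\R_T$ directly, then for the reverse inclusion use Remark~\ref{rem: separation_by_T} to split $\bbx=\bbu+\bbv$, analyze the signs of the coordinates of $\bbu$ and $\bbv$, deduce that $\bbx$ is a coordinatewise $[0,1]$-rescaling of $\bbu$, and invoke Lemma~\ref{lem: inP}. The only difference is that the paper invokes symmetry to reduce to $T=[n]$ (so that all signs are nonnegative and the case analysis collapses), whereas you handle arbitrary $T$ by splitting into the cases $i\in T$ and $\bar i\in T$; this is a presentational choice, not a different argument.
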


\begin{proof}
     Clearly, $P_T \subseteq P$. Since $y_i\Delta^0_{S_i\cap T} \subset \R_T$ for all $i \in [m],$ it follows that $P_T \subseteq \R_T$. Thus, $P_T \subseteq P \cap \R^n_T.$

    Next, we show that $P \cap \R_T \subseteq P_T.$ Due to symmetry, it suffices to only show this for $T = [n].$ That is, we only need to consider $P \cap \R_T = P \cap \R^n_{\geq 0}$. Let $\bbx = (x_1, \dots, x_n) \in P \cap \R^n_{\geq 0}.$ Then, by Remark \ref{rem: separation_by_T}, $\bbx = \bba + \bbb$ where $\bba = (a_1, \dots, a_n)  \in P_T$ and $\bbb = (b_1, \dots, b_n) \in y_1\Delta^0_{S_1\backslash T} + \cdots + y_m\Delta^0_{S_m \backslash T} = P_{T^c}$ where $T^c := [n, \Bar{n}]\backslash T = \{\Bar{1}, \dots, \Bar{n}\}.$ This implies  $a_i \geq 0$ and $b_i \leq 0$ for all $i \in [n]$. Since $\bbx = \bba + \bbb \in \R^n_{\geq 0}$, it follows that $0\leq x_i = a_i + b_i \leq a_i$ for all $i \in [n]$. Thus, $\bbx = (r_1a_1, \dots, r_na_n)$ for some $(r_1, \dots, r_n)$ with $0 \leq r_i \leq 1$ for all $i\in [n]$. Therefore, by Lemma \ref{lem: inP}, we have $\bbx \in P_T.$ This shows $P \cap \R_T \subseteq P_T$ as desired. 
\end{proof}

The result of Lemma \ref{lem: P_T} is illustrated through the following example.

\begin{ex}\label{ex: P_T}
    Let $P = \sym_{[2]} + \sym_{[2]} + \sym_{\{\Bar{1},2\}} + \sym_{\{\Bar{2}\}} \subset \R^2$ be the type B generalized permutohedron shown in Figure \ref{fig: P_T} (also shown as $P_1$ in Figure \ref{fig: type-b}). Then, $P_T$ for $T \in \ads_2$ are also depicted in the same figure. One has that 
    \begin{align*}
        P_{\{1,2\}} &= \sym_{[2]} + \sym_{[2]} + \sym_{\{2\}}&
        P_{\{1,\Bar{2}\}} &= \sym_{\{1\}} + \sym_{\{1\}} + \sym_{\{\Bar{2}\}}\\
        P_{\{\Bar{1},2\}} &=\sym_{\{2\}} + \sym_{\{2\}} + \sym_{\{\Bar{1},2\}}&
        P_{\{\Bar{1},\Bar{2}\}} &=\sym_{\{\Bar{1}\}} + \sym_{\{\Bar{2}\}}.
    \end{align*}
\end{ex}

\begin{figure}
    \centering
    \captionsetup{justification=centering}
    \includegraphics[width=0.9\linewidth]{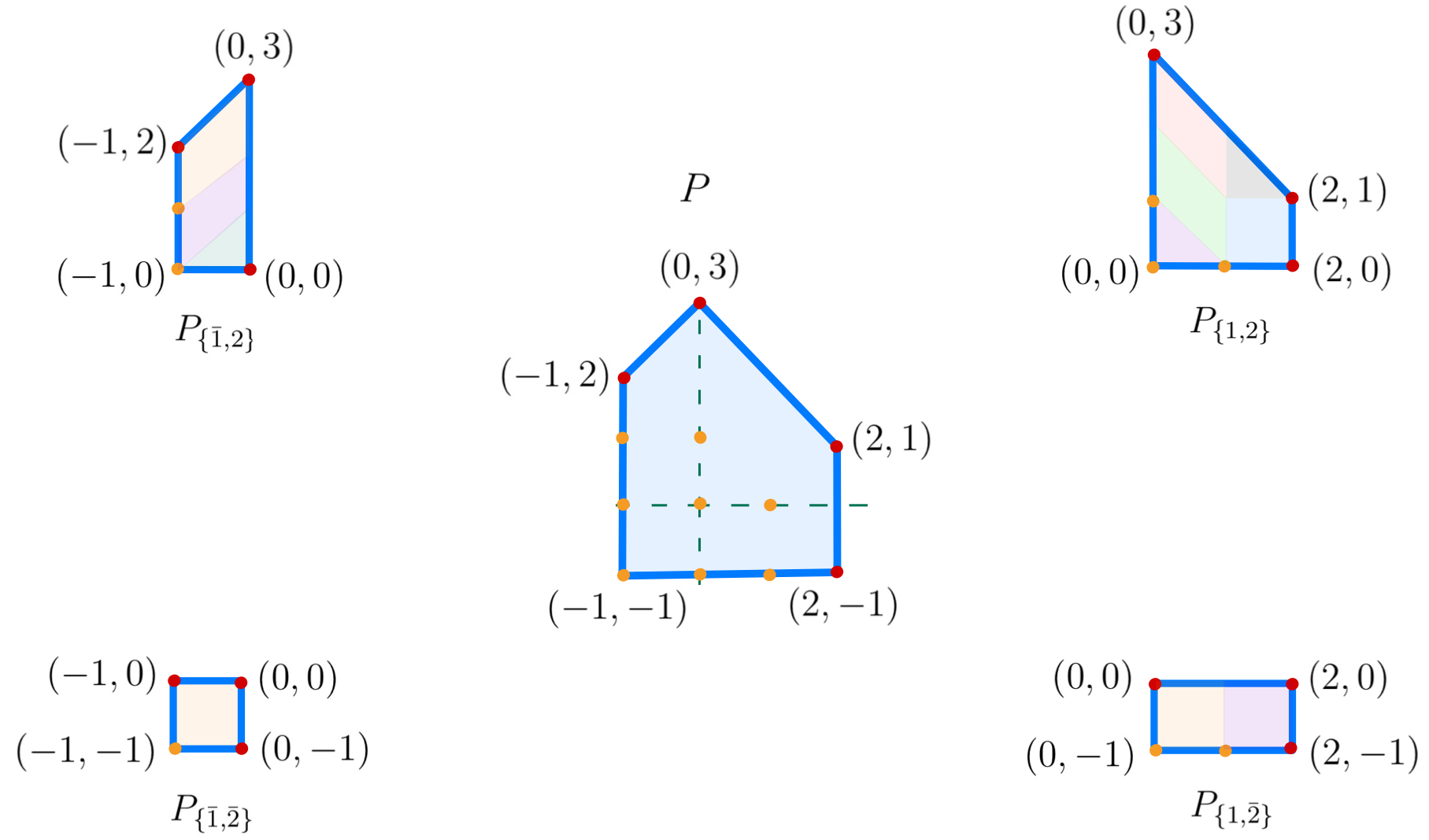}
    \caption{$P$ and $P_T$, the lattice points in $P-\square_{[n]}$ and $P_T-\square_{[n]}$ (the orange points), and fine mixed subdivisions of $P_T$ for $T \in \ads_2$}
    \label{fig: P_T}
\end{figure}

\begin{defn} For $i \in \N$, we define $||i||=||\Bar{i}|| := i,$ and, for $S \in \ads,$ $||S||:= \{||j|| \ | \ j \in S\}$.
\end{defn}

For $L \subseteq [0,n]$, let us denote by $\R^{L}$ the space of all points $(x_\ell)_{\ell \in L}$ where $x_\ell \in \R$ for all $\ell \in L.$

\begin{lem}\label{lem: biject-to-type-A}
    Let $T \in \ads_n$ and $P = y_1\Delta^0_{S_1} + \cdots + y_m\Delta^0_{S_m}$ where $y_i$ are positive integers and $S_i \in \ads$ for all $i \in [m]$. Then, $P_T$ is integrally equivalent to the type A generalized permutohedron
    $Q_T := y_1\Delta_{I_1} + \cdots + y_m\Delta_{I_m} \subseteq \R^{[0,n]}$ where $I_i := \{0\}\cup ||S_i\cap T||$ for all $i \in [m]$.
\end{lem}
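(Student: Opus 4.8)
The plan is to exhibit an explicit affine isomorphism from $\mathrm{aff}(P_T)$ to $\mathrm{aff}(Q_T)$ that carries lattice points bijectively, by ``decoding'' the sign pattern dictated by $T$ and then recording the omitted coordinate in a fresh $0$-th slot. Concretely, fix $T \in \ads_n$. For each $i \in [n]$, exactly one of $i, \bar i$ lies in $T$; write $\epsilon_i \in \{+1,-1\}$ for the corresponding sign, so that $\R_T = \{\bbx \mid \epsilon_i x_i \ge 0\}$. The map $\phi\colon \R^n \to \R^{[0,n]} = \R^{n+1}$ should send $(x_1,\dots,x_n)$ to $(x_0, \epsilon_1 x_1, \dots, \epsilon_n x_n)$ where $x_0 := 1 - \sum_{j=1}^n \epsilon_j x_j$ (so that the image lands on the hyperplane $\sum_{j=0}^n z_j = 1$, which is exactly $\mathrm{aff}(Q_T)$ when $\sum y_i = 1$; in general the constant is $\sum_i y_i$, and one scales accordingly — but since each simplex is handled summand-by-summand via Minkowski sums, it is cleanest to check the statement on each $y_i\Delta^0_{S_i}$ and invoke additivity of Minkowski sums under affine maps). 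This $\phi$ is visibly an invertible affine transformation of $\R^{n+1}$ restricted to the relevant affine spans, and it sends $\Z^n$ bijectively onto $\Z^{n+1} \cap \{\sum z_j = \text{const}\}$ because $\epsilon_i = \pm 1$ and $x_0$ is an integer combination of the $x_j$.

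The key steps, in order, are: (1) verify that $\phi$ maps $\Delta^0_{S\cap T}$ onto $\Delta_I$ with $I = \{0\} \cup \|S \cap T\|$ — indeed a vertex $\bbe_j$ of $\Delta^0_{S\cap T}$ with $j$ or $\bar j$ in $S\cap T$ goes to $\epsilon_{|j|}\bbe_{|j|}$-coordinate equal to $\epsilon_{|j|}\cdot(\pm 1) = 1$ (since the sign of $\bbe_j$ in coordinate $|j|$ is precisely $\epsilon_{|j|}$ when $j \in T$, using $\bbe_{\bar i} = -\bbe_i$), so it maps to the standard basis vector $\bbe_{|j|} \in \R^{[0,n]}$, while the apex $\mathbf 0$ maps to $\bbe_0$; hence $\phi(\Delta^0_{S\cap T}) = \conv(\bbe_0, \bbe_{|j|} : j \in S\cap T) = \Delta_{\{0\}\cup\|S\cap T\|}$; (2) conclude $\phi$ carries the Minkowski sum $P_T = \sum y_i \Delta^0_{S_i \cap T}$ onto $\sum y_i \Delta_{I_i} = Q_T$, using that affine maps commute with Minkowski sums and dilations; (3) note $\phi$ restricts to a bijection on lattice points, so $P_T$ and $Q_T$ are integrally equivalent in the sense defined earlier. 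One should also record that $Q_T \subseteq \R^{[0,n]}$ genuinely is a type A generalized permutohedron — this is immediate from Lemma \ref{lem: type-A-Minkowski-sum} since it is written as a nonnegative combination of the $\Delta_{I_i}$.

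The main obstacle — really the only place care is needed — is bookkeeping the signs: making sure that ``$\bbe_j$ for $j \in S \cap T$'' has its nonzero coordinate in position $|j|$ equal to $+1$ after applying the $\epsilon$-twist, and checking this uniformly for barred and unbarred elements using the convention $\bbe_{\bar i} = -\bbe_i$ together with the defining property $x_i \ge 0$ on $\R_T$ for $i \in T$ (resp. $x_i \le 0$ for $\bar i \in T$). A secondary subtlety is the affine-span normalization constant: $P_T$ sits in no single coordinate hyperplane a priori, but its image $Q_T$ lies on $\sum_{j=0}^n z_j = \sum_i y_i$; since I am proving the equivalence summand-by-summand and Minkowski-summing, the constant takes care of itself (the apex contributes the ``$+\bbe_0$'' that absorbs the defect), so no separate argument is required. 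Everything else — invertibility of $\phi$, preservation of $\Z^{n}$, compatibility with the definition of integral equivalence — is routine.
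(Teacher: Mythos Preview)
Your construction is essentially the paper's, run in reverse: the paper defines the \emph{linear} projection $\varphi_T\colon \R^{[0,n]}\to\R^n$, $\varphi_T(x_0,x_1,\dots,x_n)=\sum_{i\in T}x_{\|i\|}\,\bbe_i$, and checks that it restricts to a lattice-preserving bijection $\mathrm{aff}(Q_T)\to\mathrm{aff}(P_T)$; you instead build its inverse $\phi$ directly. The sign bookkeeping and the vertex-by-vertex verification on each simplex are fine.

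The slip is in step~(2). Affine maps with nonzero translation do \emph{not} commute with Minkowski sums: if $\phi(x)=Lx+b$ with $b\ne 0$, then $\phi(P_1+P_2)=L(P_1+P_2)+b$ whereas $\phi(P_1)+\phi(P_2)=L(P_1+P_2)+2b$. So you cannot verify $\phi(\Delta^0_{S_i\cap T})=\Delta_{I_i}$ summand-by-summand (each with constant~$1$ in the $x_0$-slot) and then ``add up'' to conclude $\phi(P_T)=Q_T$; the $\bbe_0$-translations accumulate, and the phrase ``the apex contributes the $+\bbe_0$ that absorbs the defect'' does not repair this. The paper sidesteps the issue precisely by going in the other direction with the \emph{linear} map $\varphi_T$, for which Minkowski additivity genuinely holds. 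The cleanest fix on your side is to work with the linear part $L$ of $\phi$ alone: on vertices one finds $L(\Delta^0_{S_i\cap T})=\Delta_{I_i}-\bbe_0$, hence by linearity $L(P_T)=\sum_i y_i(\Delta_{I_i}-\bbe_0)=Q_T-\bigl(\sum_i y_i\bigr)\bbe_0$, an integral translate of $Q_T$, which suffices for integral equivalence. Alternatively, set the constant in $x_0$ equal to $\sum_i y_i$ from the outset and verify $\phi(P_T)=Q_T$ directly, without the summand-by-summand detour.
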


\begin{proof}
Let $\varphi_T: \R^{[0,n]}\longrightarrow \R^n$ be the linear map from $\R^{[0,n]}$ onto $\R^n$ defined by 
    \begin{align}
        \label{eq: projection} \varphi_T(x_0, x_1, \dots, x_n) = \sum_{j \in T}x_{||j||} \bbe_j.
    \end{align}
That is, $\varphi_T$ is the projection that projects the first orthant of $\R^{[0,n]}$ onto the orthant $\R_T$ of $\R^n$. Note that, for $S \in \ads$, the simplex $y\Delta_{\{0\}\cup||S||}$ is given by the set of all points $\bbx \in \R^{[0,n]}$ such that $x_i = 0$ for all $i \in [n]\backslash ||S||$ and
\[\sum_{i \in \{0\}\cup ||S||}x_i = y, 0 \leq x_i \leq y \text{ for all } i \in \{0\}\cup ||S||,\] 
and that
\[\mathrm{aff}\left(y\Delta_{\{0\}\cup||S||}\right)= \left\{\bbx \in \R^{[0,n]}\mid \sum_{i \in \{0\}\cup ||S||}x_i = y \text{ and } x_i = 0 \text{ for } i \in [n]\backslash ||S||\right\}.\] 
Thus, it is easy to see that $\varphi_T$ defines an invertible map from $\mathrm{aff}(y\Delta_{\{0\}\cup||S||})$ to $\mathrm{aff}(y\Delta^0_{S}) = \{\bbx \in \R^n \ | \ x_i = 0 \text{ for } i \in [n]\backslash ||S||\}$ and preserves the lattices points between $y\Delta_{\{0\}\cup||S||}$ and $y\Delta^0_{S}$. Additionally, the map $\varphi_T$ defines an invertible transformation from the affine space
\[V= \left\{\bbx \in \R^{[0,n]}\mid \sum_{i \in \{0\}\cup ||T||}x_i = y_1 + \cdots + y_m \text{ and } x_i = 0 \text{ for } i \in [n]\backslash ||T||\right\}\] 
to $\R^n$ and preserves their lattice points. Since $\mathrm{aff}(Q_T) = \mathrm{aff}(y_1\Delta_{I_1})+\cdots + \mathrm{aff}(y_m\Delta_{I_m}) \subseteq V$ and $\mathrm{aff}(P_T) \subseteq \R^n$, it follows that $\varphi_T$ also defines an injective map from $\mathrm{aff}(Q_T)$ to $\mathrm{aff}(P_T)$. By linearity, one further sees that $\varphi_T$ defines a surjective map from $\mathrm{aff}(Q_T)$ to $\mathrm{aff}(P_T)$ and preserves the lattice points between $Q_T$ and $P_T$. Therefore, $Q_T$ and $P_T$ are integrally equivalent.
\end{proof}

  Informally, Lemma \ref{lem: biject-to-type-A} states that we can view the intersection of $P$ and the orthant $\R_T$ as a type A generalized permutohedron. This is where we can apply some of the techniques and tools introduced by Postnikov in \cite{Postnikov2005} to type B generalized permutohedra. In particular, this realization allows us to define fine mixed subdivisions and associate a bipartite graph to $P_T$. 
  
  \begin{defn}\label{def: bipartite-for-typeB}
      With the same assumptions as given in Lemma \ref{lem: biject-to-type-A}, we define the corresponding bipartite graph $G(P_T)$ of $P_T$ to be the bipartite graph $G(Q_T)$ corresponding to the type A permutohedron $Q_T$ given Lemma \ref{lem: biject-to-type-A}.
  \end{defn}

  \begin{ex}
      Let $P = \sym_{[2]} + \sym_{[2]} + \sym_{\{\Bar{1},2\}} + \sym_{\{\Bar{2}\}} \subset \R^2$ be the type B generalized permutohedron shown in Figure \ref{fig: P_T}. One sees from Figure \ref{fig: IntegrallyEquiv} that $P_{\{1,2\}}$ is integrally equivalent to the type $A$ generalized permutohedron $Q_{\{1,2\}} \subset \R^{[0,2]}$ (also shown as $P$ in Figure \ref{fig: fineMixedCell-type-a}) via the projection $(x_0,x_1,x_2) \mapsto (x_1, x_2)$. Moreover, the associated bipartite graph $G(P_{\{1,2\}})$ of $P_{\{1,2\}}$ (shown in Figure \ref{fig: IntegrallyEquiv}) is the graph $G(Q_{\{1,2\}}) = G$ where $G$ is shown in Figure \ref{fig: fineMixedCell-type-a}, except that the right vertices $r_1, r_2, r_3$ are relabeled respectively as $r_0,r_1,r_2$. The fine mixed cells in $P_{\{1,2\}}$ are also obtained by projecting the fine mixed cells in $Q_{\{1,2\}}$.  
  \end{ex}

  \begin{figure}[h!]
      \centering
      \includegraphics[width=0.85\linewidth]{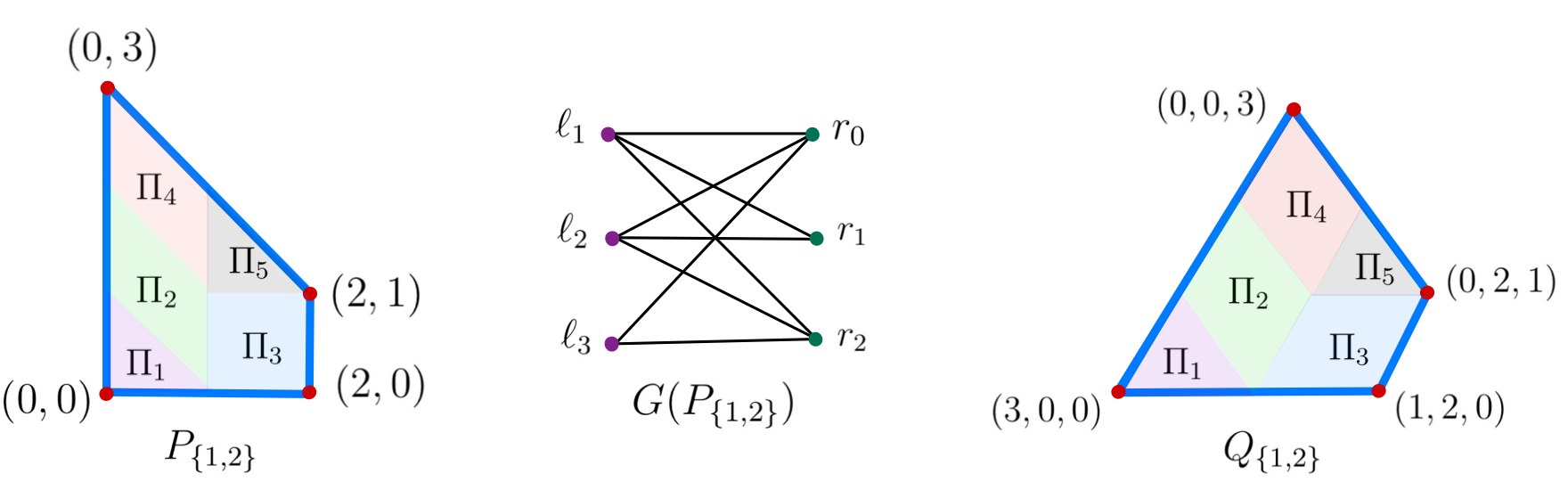}
      \caption{$P_{\{1,2\}} \subset \R^2$ and $Q_{\{1,2\}} \subset \R^{[0,2]}$ are integrally equivalent}
      \label{fig: IntegrallyEquiv}
  \end{figure}

It is important to note that when we view $P_T$ as a type A generalized permutohedron, we always set $\bbe_0$ to be the zero vector.

\section{Counting Lattice Points from A to B}

In similar fashion to how Postnikov enumerates the lattice points in type A generalized permutohedra, we only need to derive a formula for the number of lattice points in type B generalized permutohedra $P = \sum_{S \in \ads} y_S\Delta^0_S$ that holds for nonnegative integers $y_S$, for all $S \in \ads.$ Once such a formula is obtained, it will extend to hold for all integers $y_S.$ Since  $y\sym_S$ can be written as Minkowski sum of $y$ copies of $\sym_S,$ we first attempt to enumerate the number of lattice points in $P = \Delta^0_{S_1} + \cdots + \Delta^0_{S_m}$ where $S_i \in \ads.$

\begin{defn}\label{def: G-draconian-seq-for-typeB}
    Let $T \in \ads_n$ and $P = y_1\Delta^0_{S_1} + \cdots + y_m\Delta^0_{S_m}$ where $y_i$ are nonzero integers and $S_i \in \ads$ for all $i \in [m]$. A \emph{$G$-draconian} sequence of $P_T$ is a sequence $(a_1, \dots, a_m)$ of nonnegative integers satisfying $a_1 + \cdots + a_m = n$ and for every nonempty subset $I \subseteq [m]$
    \[\sum_{i \in I}a_i \leq \big| \bigcup_{i \in I} S_i\cap T \big|.\]
    We denote by $\mathrm{D}(P_T)$ the set of all $G$-draconian sequences, and denote by $\mathrm{D}(P_T)\cap \square_{[m]}$ the set of $G$-draconian sequences in which $a_i \leq 1$ for all $i \in [m]$.
\end{defn}

One observes that the definition of $G$-draconian sequence in Definition \ref{def: G-draconian-seq-for-typeB} is precisely the definition of $G(P_T)$-draconian sequence in Definition \ref{def: G-draconian-type-A}. One also sees that if $P_T$ is not $n$-dimensional, $G(P_T)$ must have a right vertex of degree zero. Consequently, $a_1 + \cdots + a_m \leq \big| \bigcup_{i \in [m]} S_i\cap T \big| < n.$ Thus, $\mathrm{D}(P_T) = \emptyset$ provided $P_T$ is not $n$-dimensional.

Recall that, for certain type A generalized permutohedra, the lattice points are shown to correspond bijectively to both their $G$-draconian sequences and their fine mixed cells, thereby yielding formula \eqref{eq: lattice-type-A} in Lemma \ref{lem: lattice-type-A}. Employing Postnikov's approach, we establish in the next lemma an analogous correspondence where the lattice points correspond bijectively to both the $G$-draconian sequences in which $a_i \leq 1$ and those fine mixed cells that are zonotopes, i.e., those cells that are Minkowski sums of line segments.

\begin{lem}\label{lem: fine-mixed-subdivision-latticepts}
    Suppose that $P = \Delta^0_{S_1} + \cdots + \Delta^0_{S_m}$ where $S_i \in \ads$ for all $i \in [m]$. Let $\calC^*$ be the set of those fine mixed cells in a fine mixed subdivision of $P$ that are zonotopes. If $T \in \ads_n$ is an admissible subset such that $P_T$ is $n$-dimensional, then 
    \[|(P_T - \square_T)\cap \Z^n| = |\mathrm{D}(P_T)\cap \square_{[m]}|= |\calC^*|.\]
\end{lem}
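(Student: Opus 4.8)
The plan is to transport everything through the integral equivalence of Lemma~\ref{lem: biject-to-type-A} and then apply Postnikov's type A machinery. Fix $T \in \ads_n$ with $P_T$ $n$-dimensional, and let $Q_T = \Delta_{I_1} + \cdots + \Delta_{I_m} \subseteq \R^{[0,n]}$ with $I_i = \{0\} \cup \|S_i \cap T\|$ be the type A generalized permutohedron integrally equivalent to $P_T$, with corresponding bipartite graph $G := G(P_T)$ on $m$ left vertices and $n+1$ right vertices indexed by $[0,n]$. Since $P_T$ is $n$-dimensional, $Q_T$ is $n$-dimensional in $\R^{[0,n]}$, so by Remark~\ref{rem: full-dim-typeA-G-connected} the graph $G$ is connected and Lemma~\ref{lem: fine-mixed-subdivision-trees} applies to $Q_T$.

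First I would establish the middle equality $|\mathrm{D}(P_T) \cap \square_{[m]}| = |\calC^*|$. By the observation following Definition~\ref{def: G-draconian-seq-for-typeB}, $\mathrm{D}(P_T)$ is exactly $\mathrm{D}(G)$ in the sense of Definition~\ref{def: G-draconian-type-A} (the shift by one in the right vertex count exactly absorbs the ``$a_1+\cdots+a_m = n$'' versus ``$=n-1$'' discrepancy, since here there are $n+1$ right vertices). Lemma~\ref{lem: fine-mixed-subdivision-trees}\eqref{itm: left-deg-g-draconion} gives a bijection $\Pi_i \leftrightarrow H_i$ between the fine mixed cells of a fixed fine mixed subdivision $\calC$ of $Q_T$ and the spanning trees $H_i$ of $G$, with $\mathrm{D}(G) = \{LD(H_i)\}$. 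Now I claim that $\Pi_i$ is a zonotope precisely when $LD(H_i) \in \square_{[m]}$: indeed $\Pi_i = P_{H_i}(1,\dots,1) = \Delta_{J_1} + \cdots + \Delta_{J_m}$ where $J_k$ is the neighborhood of $\ell_k$ in $H_i$, so $\Pi_i$ is a Minkowski sum of line segments iff every $\Delta_{J_k}$ is a point or a segment iff $|J_k| \le 2$ iff $\deg_{H_i}(\ell_k) - 1 \le 1$ for all $k$, i.e. $LD(H_i) \in \square_{[m]}$. Pushing the subdivision $\calC$ forward through $\varphi_T$ gives a fine mixed subdivision of $P_T$ (integral equivalence preserves the cell structure), and zonotopal cells map to zonotopal cells since $\varphi_T$ is linear; so $|\calC^*| = |\{i : LD(H_i) \in \square_{[m]}\}| = |\mathrm{D}(G) \cap \square_{[m]}| = |\mathrm{D}(P_T) \cap \square_{[m]}|$.

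Next I would handle $|(P_T - \square_T) \cap \Z^n| = |\mathrm{D}(P_T) \cap \square_{[m]}|$. The key point is to identify the Minkowski difference $P_T - \square_T$ under $\varphi_T$ with $Q_T - \Delta_{[n]}$ — wait, that is not quite right dimensionally, so the actual claim I must verify is that $\varphi_T$ carries $(Q_T - \Delta_{\{0\} \cup \|T\|}) \cap \Z^{[0,n]}$ bijectively onto $(P_T - \square_T) \cap \Z^n$; here $\Delta_{\{0\}\cup\|T\|} = \Delta_{[0,n]}$ since $T \in \ads_n$ forces $\|T\| = [n]$, so this is exactly $(Q_T - \Delta_{[n+1]}) \cap \Z^{[0,n]}$ in Postnikov's notation, to which Corollary~\ref{cor: lattice-typeA-Gdraconian-fine-mixed-cell} applies and gives $|(Q_T - \Delta_{[0,n]}) \cap \Z^{[0,n]}| = |\mathrm{D}(G)|$ — but I only want those draconian sequences in $\square_{[m]}$, so instead I invoke Lemma~\ref{lem: fine-mixed-subdivision-trees}\eqref{itm: right-deg-lattice-point}, which identifies the lattice points of $Q_T - \Delta_{[0,n]}$ with the right-degree vectors $RD(H_i)$, and restrict to the trees $H_i$ with $LD(H_i) \in \square_{[m]}$. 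So the crux is a direct geometric argument that, under the octant projection, $\square_T$ (the unit cube in $\R_T$, which is $\sum_{i \in T} \Delta^0_{\{i\}}$) is the image of the simplex $\Delta_{[0,n]}$ in a way compatible with Minkowski difference — more precisely that translating $\square_T$ inside $P_T$ corresponds to translating $\Delta_{[0,n]}$ inside $Q_T$ — and that the resulting lattice points of $P_T - \square_T$ are exactly those of the form $\varphi_T(RD(H_i))$ for zonotopal cells; equivalently, one shows a lattice point $\bbx \in P_T$ lies in $P_T - \square_T$ iff $\bbx + \sum_{i \in T}\bbe_i$ (or rather $\bbx$ shifted by the full cube) stays in $P_T$, and matches this against the vertex description of zonotopal cells via Lemma~\ref{lem: lattice-points-in-fine-mixed-cell}.

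The main obstacle I anticipate is precisely this last compatibility: making rigorous that ``zonotopal fine mixed cells'' in $P_T$ are exactly the cells whose lattice points survive the Minkowski difference by $\square_T$, and that these lattice points are counted without overlap. In the type A setting Postnikov's Lemma~\ref{lem: fine-mixed-subdivision-trees}\eqref{itm: right-deg-lattice-point} gives the disjointness of $\{RD(H_i)\}$ for free, but here I am restricting to a subcollection of trees and must check that restricting to $LD(H_i) \in \square_{[m]}$ on the left corresponds exactly to landing in $P_T - \square_T$ after projection on the right — i.e. that a cell $\Pi_i$ contributes a lattice point to $P_T - \square_T$ iff it is zonotopal. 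I would prove this by the same translation-of-the-cube argument Postnikov uses for $\Delta_{[n]}$: a vertex $\bbe_{j_1} + \cdots + \bbe_{j_m}$ (Lemma~\ref{lem: lattice-points-in-fine-mixed-cell}) of a fine mixed cell of $P_T$ survives translation by the full unit cube $\square_T$ staying inside $P_T$ precisely when the cell is ``thick enough in every coordinate direction of $T$,'' which for a forest-cell forces each $\Delta_{J_k}$ to contribute a genuine segment, i.e. forces zonotopality; conversely a zonotopal cell of dimension $n$ is a unit parallelepiped containing a unique lattice point that does survive. Everything else is bookkeeping through $\varphi_T$, which is harmless since $\varphi_T$ is a lattice isomorphism on the relevant affine spans by Lemma~\ref{lem: biject-to-type-A}.
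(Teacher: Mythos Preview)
Your argument for the equality $|\mathrm{D}(P_T)\cap\square_{[m]}| = |\calC^*|$ is correct and is exactly what the paper does: zonotopal cells correspond to spanning trees whose left vertices all have degree at most two, and Lemma~\ref{lem: fine-mixed-subdivision-trees}\eqref{itm: left-deg-g-draconion} identifies left-degree vectors with $G$-draconian sequences.

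The second equality, however, has a genuine gap. Your proposed reduction to type~A hinges on the claim that under $\varphi_T$ the cube $\square_T$ corresponds to the simplex $\Delta_{[0,n]}$, so that $P_T-\square_T$ matches $Q_T-\Delta_{[0,n]}$. This is false: $\varphi_T(\Delta_{[0,n]}) = \conv(\mathbf{0},\bbe_i : i\in T) = \Delta^0_T$, a simplex, not the cube $\square_T$. And indeed $Q_T-\Delta_{[0,n]}$ has $|\mathrm{D}(G)|$ lattice points by Corollary~\ref{cor: lattice-typeA-Gdraconian-fine-mixed-cell}, strictly more than the $|\mathrm{D}(G)\cap\square_{[m]}|$ you want. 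So there is no way to make this transport work directly. Your fallback heuristic (``a zonotopal cell is a unit parallelepiped containing a unique lattice point that survives translation by $\square_T$'') is also not a proof: a zonotopal fine mixed cell $\Pi$ has the same volume as $\square_T$ and will not contain any translate $\bbx+\square_T$; you need $\bbx+\square_T\subseteq P_T$, not $\subseteq\Pi$, and nothing in your outline explains why zonotopality of a single cell $\Pi$ forces the cube at $\bbx$ to fit inside the whole polytope $P_T$.

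The paper's proof supplies the missing idea: it introduces the auxiliary simplex $\nabla_T = \conv(\sum_{i\in T}\bbe_i,\ \sum_{i\in T\setminus\{j\}}\bbe_i : j\in T)$ and proves (via a perfect-matching lemma on the tree $\hat H$) that a fine mixed cell $\Pi$ is a zonotope if and only if $\nabla_T\subseteq\hat\Pi$. The point is that $\nabla_T$ is small enough to fit inside a zonotopal $\hat\Pi$, yet its apex $\sum_{i\in T}\bbe_i$ lying in $\Pi\subseteq P_T$ combined with Lemma~\ref{lem: inP} forces $\bbx+\square_T\subseteq P_T$. For the converse direction the paper shows that any non-zonotopal $\hat\Pi$ lies in $(n-1)\Delta^0_T$ and hence cannot meet the interior of any integer translate of $\nabla_T$; together with a no-common-interior lemma for integer translates of $\hat\Pi$, this gives the bijection. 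These geometric lemmas about $\nabla_T$ are the substantive content you are missing, and they do not follow from Postnikov's type~A statements by transport along $\varphi_T$.
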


We illustrate the lemma’s result through the following example, after which we develop the necessary auxiliary results before presenting a proof.

\begin{ex}
    Figure \ref{fig: P_T} shows $P = \sym_{[2]} + \sym_{[2]} + \sym_{\{\Bar{1},2\}} + \sym_{\{\Bar{2}\}} \subset \R^2$ together with $P_T$ for $T \in \ads_2$, and the lattice points in $P - \square_{[2]}$ and $P_T - \square_{[2]}$ (the orange points). One sees that, for every $T \in \ads_2$, the number of lattice points in $P_T -\square_{[2]}$ equals the number of fine mixed cells in a subdivision of $P_T$ that are zonotopes. Moreover, one has 
    \[|(P - \square_{[2]})\cap \Z^2| = \sum_{T \in \ads_2}|(P_T - \square_{[2]})\cap \Z^2| = 8.\]
\end{ex}

\begin{defn}\label{def: no-translation-cell} Let $T \in \ads_n$, and $P = \Delta^0_{S_1} + \cdots + \Delta^0_{S_m}$ where $S_i \in \ads$ for all $i \in [m]$. Let $\Pi$ be a fine mixed cell of $P_T$, and $H$ be the bipartite subgraph of $G(P_T)$ corresponding to $\Pi,$ i.e., $\Pi$ is integrally equivalent to the type A generalized permutohedron $P_{{H}}(1,\dots, 1) \subset \R^{[0,n]}.$ Suppose that $\Pi = \Delta_{I_1} + \cdots + \Delta_{I_m}$  where  $I_i \subseteq \{0\}\cup S_i\cap T$ for all $i \in [m]$. Let $K = \{I_i\ | \ i \in [m] \text{ and } |I_i| \geq 2\}$. Then, we define $\hat{\Pi}$ to be the polytope 
    \[\hat{\Pi} := \sum_{I \in K}\Delta_I\]
obtained by removing the translating factor from $\Pi,$ and denote by $\hat{H}$ the induced bipartite subgraph of $H$ corresponding to $\hat{\Pi}.$
\end{defn}

We highlight some of the basic properties of $\hat{\Pi}$ in the following two remarks.

\begin{rem}\label{rem: translation-of-Pi}
    Let $K' = \{I_i \ | \ i \in [m] \text{ and } |I_i| = 1\}$. The fine mixed cell $\Pi$ is the translation $\bbx + \hat{\Pi}$ of $\hat{\Pi}$ where $\bbx$ is the integral vector (translating factor) $$\bbx = \sum_{I \in K'}\Delta_I.$$ 
\end{rem}

\begin{rem}\label{rem: at-most-n-left-vertices}
    If we assume further that $P_T$ is $n$-dimensional ($G(P_T)$ is connected), then $H$ is a spanning tree of $G(P_T)$ on $m$ left and $n+1$ right vertices. Consequently, we have that $\hat{H}$ is a bipartite graph on $n+1$ right vertices in which every left vertex has degree at least two. Moreover, $\hat{H}$ has at most $n$ left vertices. The tree $\hat{H}$ has exactly $n$ left vertices if and only if every left vertex of $\hat{H}$ has degree two.
\end{rem} 

\begin{lem}\label{lem: no-common-interior} Let $P = \Delta^0_{S_1} + \cdots + \Delta^0_{S_m}$ where $S_i \in \ads$ for all $i \in [m]$. Suppose that $T \in \ads_n$ is an admissible subset in which $P_T$ is $n$-dimensional. If $\Pi$ is a fine mixed cell of $P_T$, then $\Pi$ and $\bby + \Pi$ have no common interior for all vectors $\bby \in \Z^n\backslash\{\mathbf{0}\}.$
\end{lem}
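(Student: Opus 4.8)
The plan is to reduce the claim to a statement about the type~A generalized permutohedron associated with $\Pi$ under the integral equivalence of Lemma~\ref{lem: biject-to-type-A}, and then to exploit the structure of fine mixed cells as Minkowski sums of coordinate simplices. First I would fix the admissible subset $T$, and using $\varphi_T$ identify $\Pi$ with a fine mixed cell $\Delta_{I_1}+\cdots+\Delta_{I_m}\subseteq\R^{[0,n]}$ of $Q_T$, where $I_i\subseteq\{0\}\cup\|S_i\cap T\|$. Since $\Pi$ is a fine mixed cell of an $n$-dimensional polytope, its corresponding subgraph $H$ is a spanning tree of $G(P_T)$ on $m$ left and $n+1$ right vertices, so $\dim\Pi=n$ and $\Pi$ has full dimension inside the hyperplane $\{x_0+\cdots+x_n=\text{const}\}$; equivalently $\varphi_T(\Pi)$ is $n$-dimensional in $\R^n$. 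So it suffices to show: a fine mixed cell $\Delta_{I_1}+\cdots+\Delta_{I_m}$ of a type~A generalized permutohedron has no lattice translate (other than the trivial one) overlapping it in an interior point.

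Next I would use the explicit description of the vertices and interior of such a cell. By Lemma~\ref{lem: lattice-points-in-fine-mixed-cell} the lattice points of a fine mixed cell coming from a forest are exactly its vertices, each of the form $\bbe_{j_1}+\cdots+\bbe_{j_m}$ with $j_i\in I_i$. Writing $\Pi=\bbx+\hat\Pi$ as in Remark~\ref{rem: translation-of-Pi}, where $\hat\Pi=\sum_{I\in K}\Delta_I$ is a sum of simplices on pairwise "tree-compatible" index sets (the $\hat H$ of Definition~\ref{def: no-translation-cell}), the interior of $\Pi$ consists of points $\bbx+\sum_{I\in K}\bbp_I$ where each $\bbp_I$ lies in the relative interior of $\Delta_I$, i.e. $\bbp_I=\sum_{j\in I}\lambda^{(I)}_j\bbe_j$ with all $\lambda^{(I)}_j>0$ and $\sum_{j\in I}\lambda^{(I)}_j<1$. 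The key combinatorial fact, which follows because $\hat H$ is a forest, is that the coordinate sets $I\in K$ form a "laminar-like" family along the tree so that the map $(\bbp_I)_{I\in K}\mapsto\sum_I\bbp_I$ is injective; hence each interior point of $\Pi$ determines its barycentric-type data uniquely.

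Then I would argue by contradiction: suppose $\bbz\in\Pi^\circ\cap(\bby+\Pi)^\circ$ for some $\bby\in\Z^n\setminus\{\0\}$. Both $\bbz$ and $\bbz-\bby$ lie in $\Pi^\circ$, so I get two representations $\bbx+\sum_I\bbp_I=\bbx+\bby+\sum_I\bbp'_I$ with all coefficients strictly between the bounds above. Comparing coordinate by coordinate and using that $\bby$ is integral, on each index $j$ appearing in some $I\in K$ the difference of the corresponding strictly-fractional coefficients must be an integer, forcing it to be $0$; on indices appearing only in singletons $K'$ there is nothing to move. A short bookkeeping argument over the tree $\hat H$ (processing leaves inward, say) then yields $\bby=\0$, the contradiction. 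I expect the main obstacle to be the bookkeeping in this last step: one must handle the fact that a single coordinate $j$ can appear in several of the sets $I\in K$ (whenever the right vertex $r_j$ has degree $\geq 2$ in $\hat H$), so the "difference is an integer" deductions are coupled across overlapping simplices, and making the leaf-stripping induction on $\hat H$ watertight — in particular tracking which $\lambda^{(I)}_j$ are pinned to $0$ versus free — is the delicate part. An alternative that sidesteps some of this is to invoke directly that $\Pi$ is a cell of a fine mixed \emph{subdivision} of $P_T$ (so distinct cells share only boundary faces) together with the observation that $\bby+\Pi$, being an integral translate of a cell, is again a union of cells of a suitably refined subdivision, so an interior overlap would force $\bby+\Pi=\Pi$ and hence $\bby=\0$ since $\Pi$ is bounded; I would present the first, more self-contained argument as the main proof and mention this as a remark.
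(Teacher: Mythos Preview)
Your leaf-stripping approach is viable and can be made rigorous, but it takes a genuinely different route from the paper. The paper's proof is shorter: it shows that $\hat\Pi$ sits inside an integral translate of a \emph{fundamental parallelepiped}. Concretely, for each $I\in K$ one replaces $\Delta_I$ by the zonotope $\Diamond_I$ spanned by $|I|-1$ lattice segments (a star on the index set $I$), observes $\Delta_I\subseteq\Diamond_I$, and then checks that the resulting $\sum_{I\in K}(|I|-1)=n$ segment directions form a $\Z$-basis of $\Z^n$; this is exactly where the tree structure of $\hat H$ enters, since vectors of the form $\bbe_j-\bbe_{j'}$ give a lattice basis precisely when the corresponding edge set is a spanning tree on $\{0,1,\dots,n\}$. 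A fundamental parallelepiped has no interior overlap with any nonzero integral translate of itself, so the lemma follows with no induction. Your argument instead proves the conclusion by processing $\hat H$ leaf by leaf: rooted at the last-removed vertex, each right leaf pins one $\lambda^{(I)}_j-\lambda'^{(I)}_j$ to zero via integrality, and the simplex constraint $\sum_{j\in I}\lambda^{(I)}_j=1$ then handles each left leaf. This works, but the bookkeeping you anticipate is real, whereas the parallelepiped trick dispatches everything in one line. (A minor slip: in $\R^{[0,n]}$ the relative interior of $\Delta_I$ has $\sum_{j\in I}\lambda^{(I)}_j=1$, not $<1$; the strict inequality only appears after projecting via $\varphi_T$, and only for those $I$ containing $0$.)

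Your alternative argument, however, does not work as stated: the integral translate $\bby+\Pi$ need not lie in $P_T$ at all, and even when it does there is no mechanism forcing it to be a union of cells of the given fine mixed subdivision (or of any refinement thereof). So you cannot conclude $\bby+\Pi=\Pi$ from the subdivision property alone; drop this remark or replace it with the parallelepiped observation.
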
 

\begin{proof}
    Suppose that $\Pi$ is a fine mixed cell of the form $\Pi = \Delta_{I_1} + \cdots + \Delta_{I_m}$ where  $I_i \subseteq \{0\}\cup S_i\cap T$ for all $i \in [m]$. We claim that $\Pi$ lies in an integral translation of a \emph{fundamental parallelepiped}. It is well-known that any fundamental parallelepiped has no common interior with its integral translation. Thus, the conclusion of the lemma follows from this claim. Hence, it only remains to prove the claim.
    
    Let $K = \{I_i\ | \ i \in [m] \text{ and } |I_i| \geq 2\}$. Then, 
    \[\hat{\Pi} := \sum_{I \in K}\Delta_I.\]
    For each $I \in K$, let us write $I= \{j_1, \dots, j_{|I|}\ | \ |j_i| < |j_{i+1}|, \text{ for all } 1 \leq i < |I| \}$, and define the corresponding set of intervals 
    $\binom{I}{2}^* := \{[\bbe_{j_1}, \bbe_{j_2}], [\mathbf{0}, \bbe_{j_3} - \bbe_{j_1}], \dots, [\mathbf{0},\bbe_{j_{|I|}} - \bbe_{j_1}] \subset \R^n\}$
    where we denote by $[\bbx, \bby]$ the line segment connecting $\bbx$ and $\bby$ (the interval from $\bbx$ to $\bby$), and set $\bbe_0 = \mathrm{0} \in \R^n$. It is easy to see that every vertex of the simplex $\Delta_I$ lies in the zonotope
    \[\Diamond_I := \sum_{\Delta \in \binom{I}{2}^*}\Delta\]
    spanned by the intervals in $\binom{I}{2}^*.$ This implies that $\Delta_I \subseteq \Diamond_I$ for all $I \in K$. In addition, let
    \[B_K := \bigcup_{I \in K} \binom{I}{2}^*\]
    be the set of intervals from all of $\binom{I}{2}^*$ for $I \in K$. Thus, $\hat{\Pi}$ lies in the zonotope
    \[\Diamond_K := \sum_{\Delta \in B_K}\Delta = \sum_{I \in K}\Diamond_I.\]
    Since $P_T$ is $n$-dimensional, by Remark \ref{rem: at-most-n-left-vertices}, $\hat{H}$ is a tree and a bipartite graph with $n+1$ right vertices. In addition, because $|\binom{I}{2}^*| = |I|-1$, it follows that $$\sum_{I \in K}\Big|\binom{I}{2}^*\Big| = n.$$ Hence, the polytope $\Diamond_K$ is an integral translation of a fundamental parallelepiped in $\R^n$. Since $\hat{\Pi} \subseteq \Diamond_K$ and $\Pi$ is an integral translation of $\hat{\Pi}$, it follows that $\Pi$ lies in an integral translation of a fundamental parallelepiped as claimed.
\end{proof}

Let $S \in \ads$ be an admissible subset. We define $\nabla_S$ to be the simplex closely related to $\Delta_S$ by $$\nabla_S:=\conv\left(\sum_{i \in S}\bbe_i,\sum_{i \in J}\bbe_i\ |\ J \subset S \text{ and } |J| = |S|-1\right).$$ 
The next lemma characterizes the fine mixed cells that are zonotopes, i.e., those fine mixed cells whose corresponding bipartite graphs are trees having left vertices of degree at most two. 

\begin{lem}\label{lem: zonotope-equivalence} Let $P = \Delta^0_{S_1} + \cdots + \Delta^0_{S_m}$ where $S_i \in \ads$ for all $i \in [m]$. Suppose that $T \in \ads_n$ is an admissible subset in which $P_T$ is $n$-dimensional. Let $\Pi$ be a fine mixed cell of $P_T$ and $\bbx \in \Z^n$ be the lattice point satisfying $\bbx+ \hat{\Pi} = \Pi$. Then, the following statements are equivalent.
\begin{enumerate}
    \item\label{itm: contains-nabla} The fine mixed cell $\Pi$ satisfies $\nabla_T \subseteq\hat{\Pi}.$
    \item \label{itm: contains-nabla-translation} $\bbx$ is the unique lattice point in $(P_T-\square_T)\cap\Z^n$ such that $\bbx + \nabla_T\subseteq \Pi$.
    \item \label{itm: zonotope} The fine mixed cell $\Pi$ is a zonotope.
    
\end{enumerate}
\end{lem}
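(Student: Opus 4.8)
The plan is to prove the cyclic chain of implications $(\ref{itm: zonotope})\Rightarrow(\ref{itm: contains-nabla})\Rightarrow(\ref{itm: contains-nabla-translation})\Rightarrow(\ref{itm: zonotope})$, extracting as much as possible from the combinatorics of the corresponding spanning tree $H$ of $G(P_T)$ and its induced subtree $\hat H$ established in Remark \ref{rem: at-most-n-left-vertices}. Recall that $\Pi = \Delta_{I_1} + \cdots + \Delta_{I_m}$ with $I_i \subseteq \{0\} \cup (S_i \cap T)$, that $\hat\Pi = \sum_{I \in K}\Delta_I$ with $K = \{I_i : |I_i| \ge 2\}$, and that $\Pi$ is a zonotope precisely when every $I \in K$ has $|I| = 2$, equivalently when every left vertex of $\hat H$ has degree exactly two, equivalently (by the count in Remark \ref{rem: at-most-n-left-vertices}, where $\hat H$ has $n+1$ right vertices) when $\hat H$ has exactly $n$ left vertices. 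In that case $\hat\Pi$ is a Minkowski sum of $n$ segments of the form $[\bbe_a, \bbe_b]$ with $a, b \in \{0\}\cup T$ (writing $\bbe_0 = \mathbf{0}$), indexed by the edges of a spanning tree on the vertex set $\{0\}\cup T$; these $n$ segment directions $\bbe_a - \bbe_b$ form a unimodular basis of $\R^n$ (the graphic matroid of a tree is regular), so $\hat\Pi$ is a unit parallelepiped.

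For $(\ref{itm: zonotope})\Rightarrow(\ref{itm: contains-nabla})$: I would show directly that the unit parallelepiped $\hat\Pi$ spanned by the tree-edge segments on $\{0\}\cup T$ contains $\nabla_T = \conv(\sum_{i\in T}\bbe_i,\ \sum_{i \in J}\bbe_i : J \subset T,\ |J| = n-1)$. The key observation is that each vertex $\sum_{i \in J}\bbe_i$ of $\nabla_T$ (including $J = T$) is a $0/1$ vector supported on $T$, and one can reach $\sum_{i\in T}\bbe_i$ from $\mathbf 0$ by walking the spanning tree: orienting the tree toward the root $0$, the generator segments can be combined with coefficients in $[0,1]$ to produce any such $0/1$ point. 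More cleanly: the vertices of the parallelepiped $\hat\Pi$ are exactly the indicator vectors $\sum_{i \in T\setminus A}\bbe_i$ as $A$ ranges over the "upper" vertex-sets cut out by edge subsets of the tree — and this collection contains $\mathbf{0}$, $\sum_{i\in T}\bbe_i$, and every $\sum_{i\in J}\bbe_i$ with $|J| = n-1$ that corresponds to deleting a single leaf-edge incident to a leaf in $T$; the remaining near-full $0/1$ vectors lie in the convex hull of these. I then take the convex hull. This is the step I expect to be the main obstacle, since it requires a careful bookkeeping of which $0/1$ vectors appear as vertices of a parallelepiped built from an arbitrary spanning tree of $\{0\}\cup T$, and an argument that the $n+1$ vertices of $\nabla_T$ all lie in the convex hull even when they are not all vertices of $\hat\Pi$.

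For $(\ref{itm: contains-nabla})\Rightarrow(\ref{itm: contains-nabla-translation})$: from $\nabla_T \subseteq \hat\Pi$ and $\Pi = \bbx + \hat\Pi$ we get $\bbx + \nabla_T \subseteq \Pi$, and since $\Pi \subseteq P_T$ we must check $\bbx \in (P_T - \square_T)\cap\Z^n$, i.e. $\bbx + \square_T \subseteq P_T$; this follows because $\square_T = \sum_{i\in T}[\mathbf 0,\bbe_i]$ is itself contained in $\nabla_T$... actually it is the other way, so instead I argue $\bbx + \square_T \subseteq \bbx + (\text{the parallelepiped } \hat\Pi)$ fails in general — rather, I use that the vertices $\bbx + \sum_{i\in J}\bbe_i$ of $\bbx + \nabla_T$ all lie in $P_T$ together with the fact that a fine mixed cell is contained in $P_T$, so $\bbx = $ (the image under $\varphi_T$ of) the right-degree-type vector $RD$-analogue of $\hat H$, which by the type A dictionary (Lemma \ref{lem: fine-mixed-subdivision-trees}\eqref{itm: right-deg-lattice-point}) lies in $(P_T - \square_T)\cap\Z^n$; uniqueness follows because Lemma \ref{lem: no-common-interior} shows distinct integer translates of a fine mixed cell have disjoint interiors, so $\bbx + \nabla_T \subseteq \Pi$ pins $\bbx$ down (the interior of $\bbx + \nabla_T$ must meet the interior of $\Pi$). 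For $(\ref{itm: contains-nabla-translation})\Rightarrow(\ref{itm: zonotope})$: I argue the contrapositive — if $\Pi$ is not a zonotope then some $I_i \in K$ has $|I_i| \ge 3$, so $\hat H$ has strictly fewer than $n$ left vertices, hence $\dim \hat\Pi = n$ but $\hat\Pi$ is a Minkowski sum of fewer than $n$ simplices of total dimension $n$ with at least one simplex of dimension $\ge 2$; a dimension/volume count (or a direct check that such a $\hat\Pi$ cannot contain any translate of the $n$-dimensional $\nabla_T$, because $\nabla_T$ has normalized volume $1$ while the "diamond" $\Diamond_K \supseteq \hat\Pi$ from Lemma \ref{lem: no-common-interior} is a single unit parallelepiped and $\hat\Pi$ is a proper subset of it when some $|I| \ge 3$) shows no lattice translate $\bbx + \nabla_T$ fits inside $\Pi$, contradicting (\ref{itm: contains-nabla-translation}). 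I would double-check the normalized-volume claim $\nvol(\nabla_S) = 1$ by noting $\nabla_S$ is unimodularly equivalent to the standard simplex (its edge vectors from the apex $\sum_{i\in S}\bbe_i$ are $-\bbe_{i}$ for $i$ ranging over $|S|-1$ elements of $S$), and similarly $\Delta_I$ has $\nvol$ one, so a non-zonotope $\hat\Pi$ with a factor $\Delta_I$, $|I|\ge 3$, still has $\nvol(\hat\Pi) = 1$ — meaning the volume count alone is not enough and I will instead need the sharper combinatorial observation that $\nabla_T$, having $n+1$ vertices, cannot embed by a lattice translation into a Minkowski sum $\hat\Pi$ that has fewer than $\binom{n+1}{n} = n+1$ vertices, which happens exactly when $\hat H$ has fewer than $n$ left vertices. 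That vertex-count comparison, made precise, is the cleanest route and I would use it to close the final implication.
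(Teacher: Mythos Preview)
Your cyclic scheme matches the paper's, but two of the three implications have genuine gaps. For $(\ref{itm: contains-nabla-translation})\Rightarrow(\ref{itm: zonotope})$ the vertex-count route cannot work: $\hat\Pi$ is $n$-dimensional, so it \emph{always} has at least $n+1$ vertices regardless of how many left vertices $\hat H$ has; the claimed equivalence ``$\hat\Pi$ has fewer than $n+1$ vertices exactly when $\hat H$ has fewer than $n$ left vertices'' is therefore false on its face (for instance $\hat\Pi=\Delta_{\{0\}\cup T}$, a single $n$-simplex with one left vertex, already has $n+1$ vertices). The paper's argument is different and short: if $\hat H$ has fewer than $n$ left vertices, then by Lemma~\ref{lem: lattice-points-in-fine-mixed-cell} every lattice point of $\hat\Pi$ is a sum of fewer than $n$ vectors $\bbe_j$ (with $\bbe_0=\mathbf 0$), so the apex $\sum_{i\in T}\bbe_i$ of $\nabla_T$ cannot lie in $\hat\Pi$, and hence no translate of $\nabla_T$ sits inside $\Pi$.

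For $(\ref{itm: contains-nabla})\Rightarrow(\ref{itm: contains-nabla-translation})$ your fallback via the type~A right-degree vector does not deliver $\bbx\in P_T-\square_T$: the identification of $\bbx$ with $\varphi_T(RD)$ fails in general (e.g.\ $n=2$, $I_1=\{1\}$, $I_2=\{1,2\}$, $I_3=\{0,2\}$ gives $\bbx=\bbe_1$ but $\varphi_T(RD(H))=\bbe_1+\bbe_2$), and in any case Lemma~\ref{lem: fine-mixed-subdivision-trees}\eqref{itm: right-deg-lattice-point} only places $RD(H)$ in the analogue of $P_T-\Delta^0_T$, which is strictly larger than $P_T-\square_T$. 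The paper instead notes that the apex $\bbx+\sum_{i\in T}\bbe_i$ of $\bbx+\nabla_T$ lies in $P_T$ and then applies Lemma~\ref{lem: inP} (coordinatewise shrinking preserves membership in $P_T$) to conclude $\bbx+\square_T\subseteq P_T$; your uniqueness step via Lemma~\ref{lem: no-common-interior} is correct and matches the paper. Finally, for $(\ref{itm: zonotope})\Rightarrow(\ref{itm: contains-nabla})$ your tree-walk intuition points the right way but is not a proof; the paper makes it precise via a separate bipartite-matching lemma (Lemma~\ref{lem: perfect-matching}): deleting any single right vertex from $\hat H$ leaves a graph with a perfect matching, and each such matching exhibits, through Remark~\ref{rem: integers-in-fine-mixed-cells}, one vertex of $\nabla_T$ as a lattice point of $\hat\Pi$.
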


To prove this lemma, we need the following result regarding the existence of a perfect matching of a certain bipartite graph.

\begin{lem}\label{lem: perfect-matching}
    Let $n$ be a positive integer. Suppose that $B$ is a bipartite graph on $n$ left vertices and $n+1$ right vertices $\{r_1, \dots, r_{n+1}\}$. Assume further that $B$ is a tree and that every left vertex of it has degree two. Then, there is a perfect matching in the induced bipartite graph obtained by removing the right vertex $r_i$ of $B.$
\end{lem}

\begin{proof} We will proceed by induction on $n \geq 1.$ It is easy to see that the statement is true for $n = 1$.  Now suppose that the statement is true for some $n \geq 1.$ Consider a bipartite graph $B$ on $n+1$ left vertices $\{\ell_1, \dots, \ell_{n+1}\}$ and $n+2$ right vertices $\{r_1, \dots, r_{n+2}\}$ satisfying the assumption of Lemma \ref{lem: perfect-matching}. Since $B$ is a tree and every left vertex of it has degree two, there exist two right vertices of degree one that are not adjacent to the same left vertex. Without loss of generality, we may assume that $r_1$ and $r_{n+2}$ are right vertices of degree one, and that $r_1$ is adjacent to $\ell_1$ and $r_{n+2}$ is adjacent to $\ell_{n+1}$. Let $B'$ be the induced bipartite graph obtained by removing $\ell_{1}$ and $r_{1}$ from $B$, and let $B''$ be the induced bipartite graph obtained by removing $\ell_{n+1}$ and $r_{n+2}$ from $B$. Then, both $B'$ and $B''$ are bipartite graphs on $n$ left vertices and $n+1$ right vertices satisfying the assumption of Lemma \ref{lem: perfect-matching}. Let $B_{(i)}$ be the induced bipartite graph obtained by removing the right vertex $r_i$ from $B$. Similarly, let $B'_{(j)}$ and $B''_{(k)}$ be the induced bipartite graphs obtained by, respectively, removing $r_j$ from $B'$ and removing $r_k$ from $B''$. By the induction hypothesis, both $B'_{(j)}$ and $B''_{(k)}$ contains a perfect matching. Thus, a perfect matching in $B_{(n+2)}$ can be constructed by matching $\ell_1$ with $r_1$ and matching other vertices of $B_{(n+2)}$ using a perfect matching in the $B'_{(n+2)}$. Similarly, for $i \in [n+1]$, one can construct a perfect matching in $B_{(i)}$ by matching $\ell_{n+1}$ with $r_{n+2}$ and matching other vertices of $B_{(i)}$ using a perfect matching in the $B''_{(i)}$. Therefore, the statement holds for all $n \geq 1.$
\end{proof}

\begin{proof}[Proof of Lemma \ref{lem: zonotope-equivalence}] Due to symmetry, we may assume for simplicity of notation without loss of generality that $T = [n]$. Let $\bbx \in \Z^n_{\geq 0}$ be the lattice point satisfying $\bbx+ \hat{\Pi} = \Pi$.

Firstly, we show that \eqref{itm: contains-nabla} implies \eqref{itm: contains-nabla-translation}. Suppose that $\nabla_{[n]} \subseteq\hat{\Pi}.$ Then, $\bbx + \nabla_{[n]} \subset \bbx + \hat{\Pi} = \Pi.$ In particular, we have $\bbx + \bbe_1 + \cdots + \bbe_n \in \Pi \subseteq P_{[n]}$. By Lemma \ref{lem: inP}, we must have $\bbx + \square_{[n]} \subseteq P_{[n]}.$ Thus, $\bbx \in (P_{[n]} - \square_{[n]})\cap\Z^n.$ Hence, $\bbx \in (P_{[n]} - \square_{[n]})\cap\Z^n$ is an integral vector such that $\bbx + \nabla_{[n]} \subset \Pi$.

Suppose that $\bby \in (P_{[n]} - \square_{[n]})\cap\Z^n$ satisfies $\bby + \nabla_{[n]} \subset \Pi.$ Since $\Pi = \bbx + \hat{\Pi}$, we have $\nabla_{[n]} \subseteq \bbx - \bby + \hat{\Pi}.$ Because $\nabla_{[n]} \subseteq\hat{\Pi}$, we deduce that $\hat{\Pi}$ has common interior with $\bbx - \bby + \hat{\Pi}$. By Lemma \ref{lem: no-common-interior}, we must have $\bbx = \bby.$ This shows the uniqueness of the lattice point $\bbx$.

Next, we show that \eqref{itm: contains-nabla-translation} implies \eqref{itm: zonotope}. Suppose that $\bbx \in (P_{[n]}-\square_{[n]})\cap\Z^n$ and $\bbx + \nabla_{[n]}\subseteq \Pi$. We claim that $\Pi$ is a zonotope. Assume for the sake of contradiction that $\Pi$ is not a zonotope. Then, $\hat{\Pi}$ is also not a zonotope. This implies that $\hat{H}$ is a bipartite graph that is also a tree with at least one left vertex having degree at least three. Together with Remark \ref{rem: at-most-n-left-vertices}, we conclude that $\hat{H}$ must have less than $n$ left vertices. Thus, the set $\{r_{1},\dots, r_{n}\}$ of the right vertices cannot be a transversal of the neighbors of the left vertices of $\hat{H}.$ Thus, by Lemma \ref{lem: lattice-points-in-fine-mixed-cell} and Remark \ref{rem: integers-in-fine-mixed-cells}, we see that $\bbe_1 + \cdots + \bbe_n \not\in \hat{\Pi}.$ In particular, we must have $\nabla_{[n]} \not\subseteq \hat{\Pi}.$ Therefore, $\bbx + \nabla_{[n]} \not\subset  \bbx + \hat{\Pi} = \Pi,$ a contradiction. This shows that \eqref{itm: contains-nabla-translation} implies \eqref{itm: zonotope}.

Lastly, we show that \eqref{itm: zonotope} implies \eqref{itm: contains-nabla}. Suppose that $\Pi$ is a zonotope. Then, $\hat{\Pi}$ is also a zonotope. Thus, $\hat{H}$ is a bipartite graph that is also a tree on $n+1$ right vertices in which every left vertex has degree two. By Remark \ref{rem: at-most-n-left-vertices}, $\hat{H}$ must have exactly $n$ left vertices. To see that $\nabla_{[n]} \subseteq \hat{\Pi}$, it suffices to show  $\bbe_1 + \cdots + \bbe_n \in \hat{\Pi}$ and $\bbe_1 + \cdots \bbe_{i-1} + \bbe_{i+1} + \cdots + \bbe_n\in \hat{\Pi}$ for all $i \in [n]$. For $i \in \{0, 1, \dots, ,n\}$, let $\hat{H}_{(i)}$ be the induced bipartite graph obtained by removing the right vertex $r_i$ of $\hat{H}$. Then, by Lemma \ref{lem: perfect-matching}, there is a matching in $\hat{H}_{(i)}$ for all $i \in \{0, 1, \dots, n\}$. By Remark \ref{rem: integers-in-fine-mixed-cells}, a matching of $\hat{H}_{(0)}$ implies that $\bbe_1 + \cdots + \bbe_n \in \hat{\Pi}$ while a matching of $\hat{H}_{(i)}$ implies that $\bbe_1 + \cdots \bbe_{i-1} + \bbe_{i+1} + \cdots + \bbe_n\in \hat{\Pi}$ for all $i \in [n]$. This completes the proof.
\end{proof}

We are now ready to give a proof of Lemma \ref{lem: fine-mixed-subdivision-latticepts}.

\begin{proof}[Proof of Lemma \ref{lem: fine-mixed-subdivision-latticepts}]
     We first show that $|\mathrm{D}(P_T)\cap \square_{[m]}|= |\calC^*|$. Suppose that $\calC^* = \{\Pi_1, \dots, \Pi_q\}$. Since $P_T$ is $n$-dimensional, its corresponding bipartite graph $G(P_T)$ is connected. Moreover, because all fine mixed cells in $\calC^*$ are zonotopes, each cell $\Pi_i \in \calC^*$ corresponds to a spanning tree $H_i$ of $G(P_T)$ whose left vertices have degree at most two. By Lemma \ref{itm: left-deg-g-draconion}/\ref{itm: left-deg-g-draconion}, $LD(H_1), \dots, LD(H_q) $ are distinct elements of $\mathrm{D}(P_T)\cap \square_{[m]}$. Thus, $|\mathrm{D}(P_T)\cap \square_{[m]} \geq |\calC^*|$. Let $\calC$ be the fine mixed subdivision of $P_T$ in which $\calC^* \subseteq \calC$. Also by Lemma \ref{itm: left-deg-g-draconion}/\ref{itm: left-deg-g-draconion}, for every $\bba \in \mathrm{D}(P_T)\cap \square_{[m]}$, there exists a fine mixed cell $\Pi \in \calC$ such that $\bba = LD(H)$ where $H$ is a corresponding spanning tree of $\Pi$. This implies that the corresponding spanning tree of $\Pi$ has left vertices of degree at most two. Thus, the fine mixed cell $\Pi$ is a zonotope, i.e., $\Pi \in \calC^*.$ Therefore, $|\mathrm{D}(P_T)\cap \square_{[m]}| \leq |\calC^*|$, which then implies $|\mathrm{D}(P_T)\cap \square_{[m]}|= |\calC^*|$ as desired.

    Next, we show that $|\calC^*| \leq |(P_T - \square_T)\cap \Z^n|.$ By Lemma \ref{lem: zonotope-equivalence}/\eqref{itm: contains-nabla-translation}, the map $\Pi_i \mapsto \bbx$ where $\bbx \in (P_T - \square_T)\cap \Z^n$ satisfying $\bbx + \nabla_T \subseteq \Pi_i$ is an injection from $\calC^*$ to $(P_T - \square_T)\cap \Z^n$. Hence, $|\calC^*| \leq |(P_T - \square_T)\cap \Z^n|.$

    Lastly, we show that $|\calC^*| \geq |(P_T - \square_T)\cap \Z^n|.$ Let $\bbx \in (P_T - \square_T)\cap \Z^n.$ Due to symmetry, we may assume for simplicity of notation without loss of generality that $T = [n]$. Let $\Pi \in \calC$ be a fine mixed cell that has a common interior with $\bbx + \nabla_{[n]}.$ Note that such a fine mixed cell $\Pi$ exists, because $\bbx + \nabla_{[n]} \subseteq \bbx + \square_{[n]} \subseteq P_T$. We claim that $\Pi$ is a zonotope ($\Pi \in \calC^*$) and that $\bbx$ is the unique lattice point in which $\bbx + \nabla_{[n]}$ is contained in $\Pi$. This claim readily implies that $|\calC^*| \geq |(P_T - \square_T)\cap \Z^n|$, since it establishes the injectivity of the map from $(P_T - \square_T)\cap \Z^n$ to $\calC^*$ defined by $\bbx \mapsto \Pi$ where $\bbx + \nabla_{[n]} \subseteq \Pi$. Thus, it only remains to prove the claim.
    
    Assume for the sake of contradiction that $\Pi$ is not a zonotope. Then, $\hat{\Pi}$ is also not a zonotope. This implies that $\hat{H}$ is a bipartite graph that is also a tree with at least one left vertex having degree at least three. Together with Remark \ref{rem: at-most-n-left-vertices}, we conclude that $\hat{H}$ must have less than $n$ left vertices. Thus, the set $\{r_{i_1},\dots, r_{i_n}\}$ of the right vertices cannot be a transversal of the neighbors of the left vertices of $\hat{H}.$ By Lemma \ref{lem: lattice-points-in-fine-mixed-cell} and Remark \ref{rem: integers-in-fine-mixed-cells}, we deduce that $\hat{\Pi}$ lies in the half-space $x_1 + \cdots + x_n \leq n-1.$ Because $\hat{\Pi} \subseteq \R^n_{\geq 0}$, it follows that $\hat{\Pi}$ lies in the polytope $Q$ given by
\begin{align*}
    Q:= \{\bbx \in \R^n\ | \ x_1 + \cdots + x_n \leq n-1 \ \text{ and } 0 \leq x_i \text{ for all } i \in [n]\} = (n-1)\sym_{[n]}.
\end{align*}
Note that $\nabla_{[n]}:= \{\bbx  \in \R^n \ | \ x_1 + \cdots + x_n \geq n-1 \ \text{ and } 0 \leq x_i \leq 1 \text{ for all } i \in [n]\}.$ Thus, for $\bby = (y_1, \dots, y_n)$ and $\bbz = (z_1, \dots, z_n)$ in $\Z^n,$ the translations $\bby + Q$ and $\bbz+ \nabla_{[n]}$ are given by
\begin{align*}
    \bby + Q&= \{\bbx\ | \ x_1 + \cdots + x_n \leq y_1 + \cdots + y_n + n-1 \ \text{ and } y_i \leq x_i \text{ for all } i \in [n]\}\\
    \bbz+ \nabla_{[n]}&= \{\bbx\ | \ x_1 + \cdots + x_n \geq z_1 + \cdots + z_n + n-1 \ \text{ and } z_i \leq x_i \leq z_i + 1 \text{ for all } i \in [n]\}.
\end{align*}
If $z_1 + \cdots + z_n + n-1 \geq y_1 + \cdots + y_n + n-1$, then $\bby + Q$ and $\bbz + \nabla_{[n]}$ have no common interior. If $z_1 + \cdots + z_n + n-1 < y_1 + \cdots + y_n + n-1$, then there exists $j \in [n]$ such that $z_j + 1 \leq y_j.$ This implies that $\bby + Q$ lies in the half-space $z_j + 1 \leq x_j$ while $\bbz + \nabla_{[n]}$ lies in the half-space $x_j \leq z_j + 1.$ Hence, $\bby+ Q$ and $\bbz + \nabla_{[n]}$ will also have no common interior in this case. That is, one must have that $\bby+ Q$ and $\bbz + \nabla_{[n]}$ have no common interior for all $\bby, \bbz \in \Z^n.$ As a consequence, one has that $\bby + \hat{\Pi}$ and $\bbz + \nabla_{[n]}$ have no common interior for all $\bby, \bbz \in \Z^n.$ Since $\Pi$ is an integral translation of $\hat{\Pi}$, it also follows that $\Pi$ has no common interior with $\bbx + \nabla_{[n]}$, a contradiction to our choice of $\Pi$. Therefore, $\Pi$ is a zonotope as claimed. 

Now suppose that $\bby \in \Z^n$ is the integral vector such that $\bby + \hat{\Pi} = \Pi$. Since $\Pi$ is a zonotope, by Lemma \ref{lem: zonotope-equivalence}/\eqref{itm: contains-nabla-translation}, $\bby$ is the unique lattice point such that $\bby + \nabla_{[n]} \subseteq \Pi.$  Thus, $\bbx + \nabla_{[n]} \subseteq \bbx-\bby+\Pi.$ This means that $\bbx-\bby+\Pi$ and $\Pi$ have a common interior. By Lemma \ref{lem: no-common-interior}, we must have $\bbx = \bby.$ Therefore, $\bbx$ is, as claimed, the unique lattice point such that $\bbx + \nabla_{[n]} \subseteq \Pi$.
\end{proof}

The next theorem is a consequence of Lemma \ref{lem: fine-mixed-subdivision-latticepts}.

\begin{thm}\label{thm: lattice-one-to-one} Suppose that $P = \Delta^0_{S_1} + \cdots + \Delta^0_{S_m}$ where $S_i \in \ads$ for all $i \in [m]$. Then, 
\begin{align}\label{eq: lattice-pts-typeB}
    |(P - \square_{[n]})\cap \Z^n| = \sum_{T \in \ads_n}|\mathrm{D}(P_T)\cap \square_{[m]}|.
\end{align}
\end{thm}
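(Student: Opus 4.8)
The plan is to split the left-hand count according to which octant a lattice point "belongs to" and then match each octant's contribution to the right-hand side through Lemma~\ref{lem: fine-mixed-subdivision-latticepts}. The one observation that drives everything is that the standard cube and the octant cubes differ only by an integer translation: for $T\in\ads_n$ set $\bbv_T := \sum_{i\,:\,\Bar{i}\in T}\bbe_i \in \Z^n$; then $\square_{[n]} = \bbv_T + \square_T$, since translating the coordinate-$i$ interval $[-1,0]$ by $1$ produces $[0,1]$.

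First I would set up the octant bookkeeping. For $\bbx\in\Z^n$ define $T(\bbx)\in\ads_n$ by putting $i\in T(\bbx)$ when $x_i\ge 0$ and $\Bar{i}\in T(\bbx)$ when $x_i<0$. A direct check of the defining inequalities of $\R_T$ shows that $T(\bbx)$ is the \emph{unique} $T\in\ads_n$ with $\bbx+\bbv_T\in\R_T$: indeed $(\bbx+\bbv_T)\cdot\bbe_i = x_i$ for $i\in T$ and $(\bbx+\bbv_T)\cdot\bbe_{\Bar{i}} = -x_i-1$ for $\Bar{i}\in T$, so $\bbx+\bbv_T\in\R_T$ forces $T$ to agree with $T(\bbx)$ coordinatewise.

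The core step is the equivalence: for $\bbx\in\Z^n$ and $T=T(\bbx)$, we have $\bbx+\square_{[n]}\subseteq P$ if and only if $(\bbx+\bbv_T)+\square_T\subseteq P_T$. By the translation identity $\bbx+\square_{[n]} = (\bbx+\bbv_T)+\square_T$, so only the replacement of $P$ by $P_T$ needs argument. If $\bbx+\square_{[n]}\subseteq P$, then the set $(\bbx+\bbv_T)+\square_T = \bbx+\square_{[n]}$ lies in $P$, and it also lies in $\R_T$ because $\R_T$ is a cone containing both $\bbx+\bbv_T$ and $\square_T$; hence it lies in $P\cap\R_T = P_T$ by Lemma~\ref{lem: P_T}. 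The converse is immediate from $P_T\subseteq P$. Consequently $\bbx\mapsto \bbx+\bbv_{T(\bbx)}$ is a bijection
\[
(P-\square_{[n]})\cap\Z^n \;\longrightarrow\; \bigsqcup_{T\in\ads_n}(P_T-\square_T)\cap\Z^n :
\]
it is well defined and injective because on the block with $T(\bbx)=T$ it is translation by the fixed vector $\bbv_T$ and the blocks are disjoint, and it is surjective because any $\bby\in(P_T-\square_T)\cap\Z^n$ satisfies $\bby\in P_T\subseteq\R_T$, which forces $T(\bby-\bbv_T)=T$, so $\bby$ is the image of $\bbx:=\bby-\bbv_T$, with $\bbx\in(P-\square_{[n]})\cap\Z^n$ by the equivalence. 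Therefore $|(P-\square_{[n]})\cap\Z^n| = \sum_{T\in\ads_n}|(P_T-\square_T)\cap\Z^n|$.

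Finally I would invoke Lemma~\ref{lem: fine-mixed-subdivision-latticepts}: when $P_T$ is $n$-dimensional it gives $|(P_T-\square_T)\cap\Z^n| = |\mathrm{D}(P_T)\cap\square_{[m]}|$, and when $P_T$ is not $n$-dimensional both quantities vanish — $\mathrm{D}(P_T)=\emptyset$ by the remark following Definition~\ref{def: G-draconian-seq-for-typeB}, while $(P_T-\square_T)=\emptyset$ since the $n$-dimensional cube $\square_T$ has no translate inside the lower-dimensional polytope $P_T$. Summing over $T$ yields the claimed identity. I do not expect a genuinely hard step; the part deserving the most care is the core equivalence — precisely, checking that $(\bbx+\bbv_T)+\square_T$ lands in the octant $\R_T$ exactly when $T=T(\bbx)$, which is what lets Lemma~\ref{lem: P_T} apply and guarantees that the octant blocks actually partition $(P-\square_{[n]})\cap\Z^n$.
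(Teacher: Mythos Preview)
Your proof is correct and follows essentially the same route as the paper's: decompose by octant, use that $\square_{[n]}$ and $\square_T$ differ by an integer translation, and finish with Lemma~\ref{lem: fine-mixed-subdivision-latticepts}. The only cosmetic difference is that the paper first partitions $(P-\square_{[n]})\cap\Z^n$ as $\bigsqcup_T (P_T-\square_{[n]})\cap\Z^n$ and then translates each piece, whereas you package both steps into the single explicit bijection $\bbx\mapsto \bbx+\bbv_{T(\bbx)}$; your more detailed justification of the octant bookkeeping (via the computation $(\bbx+\bbv_T)\cdot\bbe_{\bar i}=-x_i-1$) is a welcome addition over the paper's one-line ``we first observe.''
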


\begin{proof}
We first observe that 
\[(P - \square_{[n]})\cap \Z^n = \bigsqcup_{T\in \ads_n}(P_T-\square_{[n]})\cap\Z^n.\]
Thus,
\[|(P - \square_{[n]})\cap \Z^n| = \sum_{T \in \ads_n}|(P_T - \square_{[n]})\cap \Z^n|.\]
Since $\square_{T}$ is a translation of $\square_{[n]}$ by an integral vector, we have $|(P_T - \square_{[n]})\cap \Z^n| = |(P_T - \square_{T})\cap \Z^n|$ for all $T \in \ads_n$. Consequently, we have 
\begin{align}\label{eq: lattice-pts-typeB2}
    |(P - \square_{[n]})\cap \Z^n| = \sum_{T \in \ads_n}|(P_T - \square_{T})\cap \Z^n|.
\end{align}

For $T \in \ads_n$ such that $P_T$ is not $n$-dimensional, we have that $|(P_T - \square_{T})\cap \Z^n| = 0 = |\mathrm{D}(P_T)\cap \square_{[m]}|$. Also, for $T \in \ads_n$ such that $P_T$ is $n$-dimensional, we have by Lemma \ref{lem: fine-mixed-subdivision-latticepts} that $|(P_T - \square_{T})\cap \Z^n| = |\mathrm{D}(P_T)\cap \square_{[m]}|$. Thus, formula \eqref{eq: lattice-pts-typeB} follows from \eqref{eq: lattice-pts-typeB2}.
\end{proof}

Theorem \ref{thm: lattice-one-to-one} implies that
\begin{equation}\label{eq: lattice-point-typeb1}
    |(P - \square_{[n]})\cap \Z^n| = \sum_{T \in \ads_n}\left(\sum_{\bba \in \mathrm{D}(P_T)}\binom{1}{a_1}\cdots \binom{1}{a_m}\right),
\end{equation}
since only those $G$-draconian sequences $\bba = (a_1, \dots, a_m)$ with $a_i \leq 1$ make the summands in equation (\ref{eq: lattice-point-typeb1}) nonzero, and equal to one. Together with a simple binomial identity, we derive the following key result as a consequence.
\begin{cor}\label{cor: ehrhart-typeb}
    Suppose that $P = y_1\Delta^0_{S_1} + \cdots + y_m\Delta^0_{S_m}$ where $y_i$ are integers and $S_i \in \ads$ for all $i \in [m]$. Then, the number of lattice points in $P - \square_{[n]}$ is given by
    \begin{equation}\label{eq: lattice-points-typeb2}
    |(P - \square_{[n]})\cap \Z^n| = \sum_{T \in \ads_n}\left(\sum_{\bba \in \mathrm{D}(P_T)}\binom{y_1}{a_1}\cdots \binom{y_m}{a_m}\right).
\end{equation}
\end{cor}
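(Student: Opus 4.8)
The plan is to replay, in the type~B setting, Postnikov's passage from $P_G(1,\dots,1)$ to $P_G(y_1,\dots,y_m)$ that produces Lemma~\ref{lem: lattice-type-A}. As a first reduction, it suffices to prove \eqref{eq: lattice-points-typeb2} when every $y_i$ is a \emph{positive} integer; the general case then follows from the extension principle noted before the statement, since both sides of \eqref{eq: lattice-points-typeb2} are polynomial in $(y_1,\dots,y_m)$ (the right-hand side manifestly so, each $\binom{y_i}{a_i}$ being a polynomial in $y_i$), and two polynomials agreeing on all of $\Z_{>0}^m$ agree everywhere.

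So assume $y_i\ge 1$ for all $i\in[m]$. Since $y\Delta^0_S$ is the Minkowski sum of $y$ copies of $\Delta^0_S$, we may present $P$ as $\Delta^0_{S'_1}+\cdots+\Delta^0_{S'_{m'}}$ with $m'=y_1+\cdots+y_m$, where the list $(S'_1,\dots,S'_{m'})$ consists of $y_1$ copies of $S_1$, followed by $y_2$ copies of $S_2$, and so on; write $B_i\subseteq[m']$ for the \emph{block} of indices carrying the copies of $S_i$. This rewriting changes neither $P$ nor $P_T$ for any $T\in\ads_n$, so applying Theorem~\ref{thm: lattice-one-to-one} in the form \eqref{eq: lattice-point-typeb1} to this all-ones presentation gives
\[
|(P - \square_{[n]})\cap\Z^n| \;=\; \sum_{T\in\ads_n}\;\sum_{\bba'\in\mathrm{D}(P'_T)}\binom{1}{a'_1}\cdots\binom{1}{a'_{m'}},
\]
where $P'_T=\Delta^0_{S'_1\cap T}+\cdots+\Delta^0_{S'_{m'}\cap T}$.

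The core of the proof is to regroup the inner sum over $\mathrm{D}(P'_T)$ by blocks. To a sequence $\bba'=(a'_1,\dots,a'_{m'})$ associate $\bba=(a_1,\dots,a_m)$ with $a_i:=\sum_{j\in B_i}a'_j$. The key elementary observation is that $\bigcup_{j\in J}(S'_j\cap T)$ depends only on \emph{which} blocks $J$ meets, not on how many copies of a given $S_i$ it contains. Using this, the draconian condition transfers in both directions: if $\bba'\in\mathrm{D}(P'_T)$ then $\bba\in\mathrm{D}(P_T)$, obtained by applying the inequality for $\bba'$ to sets $J=\bigcup_{i\in I}B_i$; and conversely, every $0/1$-vector $\bba'$ with $\sum_{j\in B_i}a'_j=a_i$ for all $i$ and $\bba\in\mathrm{D}(P_T)$ lies in $\mathrm{D}(P'_T)$, obtained by applying the inequality for $\bba$ to the set $I$ of blocks that a given $J\subseteq[m']$ meets. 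Since any summand with some $a'_j\ge 2$ vanishes, we group the surviving terms by $\bba\in\mathrm{D}(P_T)$: inside block $B_i$, the $0/1$-refinements of $\bba$ correspond to choosing which $a_i$ of the $y_i$ copies carry a $1$, giving $\binom{y_i}{a_i}$ of them, which is the Vandermonde-type identity $\sum_{\epsilon_1+\cdots+\epsilon_{y_i}=a_i}\binom{1}{\epsilon_1}\cdots\binom{1}{\epsilon_{y_i}}=\binom{y_i}{a_i}$. Multiplying over $i\in[m]$ turns the inner sum into $\sum_{\bba\in\mathrm{D}(P_T)}\binom{y_1}{a_1}\cdots\binom{y_m}{a_m}$, which is exactly \eqref{eq: lattice-points-typeb2}.

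I expect the block-regrouping to be the only delicate point — in particular, checking that the draconian inequalities transfer \emph{both} from $P'_T$ to $P_T$ and back, the converse direction being the one that genuinely needs the observation that $\bigcup_{j\in J}(S'_j\cap T)$ is insensitive to multiplicities, and keeping the restriction $a'_j\le 1$ (forced by the factors $\binom{1}{a'_j}$) in play throughout the count. The opening reduction to positive $y_i$ and the closing binomial identity are routine, running parallel to the type~A argument behind Lemma~\ref{lem: lattice-type-A}.
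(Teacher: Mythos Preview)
Your proof is correct and follows essentially the same route as the paper's own argument: reduce to positive $y_i$, expand $P$ into $y_1+\cdots+y_m$ unit summands, apply \eqref{eq: lattice-point-typeb1}, and collapse via the Vandermonde identity. The paper is terser and leaves the block-regrouping implicit, while you spell out why the draconian conditions transfer in both directions between $\mathrm{D}(P'_T)$ and $\mathrm{D}(P_T)$; your added detail is sound (and in fact the converse direction works for arbitrary nonnegative refinements, not only $0/1$, though only $0/1$ vectors contribute).
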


\begin{proof} As noted at the beginning of the section, it suffices to show that the formula holds for positive integers $y_1, \dots, y_m$. Clearly, we can write any $y\Delta^0_{S}$ where $S \in \ads$ and $y$ is a positive integer as the Minkowski sum of $y$ copies of $\sym_S.$ By writing 
\[P = \underbrace{\sym_{S_1} + \cdots +\sym_{S_1}}_{y_1 \text{ terms }} + \cdots + \underbrace{\sym_{S_m} + \cdots + \sym_{S_m}}_{y_m \text{ terms }}\]
and applying formula \eqref{eq: lattice-point-typeb1} to $P$, one can express the right-hand side of \eqref{eq: lattice-point-typeb1} as
\[\sum_{T \in \ads_n}\left(\sum_{\bba \in \mathrm{D}(P_T)}\binom{y_1}{a_1}\cdots \binom{y_m}{a_m}\right)\]
using the binomial identity
\[\binom{y}{a} = \sum_{\underset{b_1, \dots,\ b_y \,\in \Z_{\geq 0}}{b_1 + \cdots + b_y = a}} \binom{1}{b_1}\cdots \binom{1}{b_y} \ \ \text{ for all } a,y \in \Z_{\geq 0}.\qedhere\]
\end{proof}

\begin{rem}\label{rem: formula-compared-to-Eurs}
     Let $$D(P) := \bigcup_{T \in \ads_n}D(P_T).$$ Then, formula \eqref{eq: lattice-points-typeb2} can also be expressed as
    \begin{align}\label{eq: ehrhart-type-b-2}
        |(P - \square_{[n]})\cap \Z^n| 
        &= \sum_{\bba \in D(P)}|\{T \in \ads_n \ | \ \bba \in D(P_T)\}|\binom{y_1}{a_1}\cdots \binom{y_m}{a_m}.
    \end{align}
    One observes that formula \eqref{eq: ehrhart-type-b-2} bears a resemblance to formula \eqref{eq: typeB-Ehrhart-Eur} from Lemma \ref{lem: typeB-Vol-Ehrhart} by Eur, Fink, Larson, and Spink. However, \eqref{eq: ehrhart-type-b-2} is written in a more compact form: each summand in \eqref{eq: ehrhart-type-b-2} consolidates $\frac{(a_1 + \cdots + a_m)!}{a_1! \cdots a_m!}$ individual terms that appear in   \eqref{eq: typeB-Ehrhart-Eur}.
\end{rem}

The Ehrhart polynomial of $P = y_1\Delta^0_{S_1} + \cdots + y_m\Delta^0_{S_m}$ can be computed simply by replacing $P$ in \eqref{eq: lattice-points-typeb2} with $tP + \square_{[n]} = ty_1\Delta^0_{S_1} + \cdots + ty_m\Delta^0_{S_m} + \sym_{\{1\}} + \cdots + \sym_{\{n\}}$. This allows us to obtain a formula for the volume of $P$ by computing the leading coefficient of its Ehrhart polynomial as stated in the following corollary.
\begin{cor}\label{cor: volume-typeb}
    Suppose that $P = y_1\Delta^0_{S_1} + \cdots + y_m\Delta^0_{S_m}$ where $y_i$ are real numbers and $S_i \in \ads$ for all $i \in [m]$. Then, the volume of $P$ is given by
    \vspace{-0.5cm}\begin{equation}\label{eq: volume-typeb}
    \vol(P) = \sum_{T \in \ads_n}\left(\sum_{\bba \in \mathrm{D}(P_T)}\frac{y_1^{a_1}}{a_1!}\cdots \frac{y_m^{a_m}}{a_m!}\right).
\end{equation}
\end{cor}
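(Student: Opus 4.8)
The plan is to recover the volume as the coefficient of $t^n$ in the Ehrhart polynomial of $P$, and to compute that polynomial by applying Corollary \ref{cor: ehrhart-typeb} to a suitably padded polytope. I would first reduce to the case of positive integers $y_i$. The right-hand side of \eqref{eq: volume-typeb} is manifestly a polynomial in $(y_1,\dots,y_m)$, and for $y_i \geq 0$ the left-hand side $\vol\bigl(\sum_i y_i\Delta^0_{S_i}\bigr)$ is the homogeneous degree-$n$ mixed-volume polynomial in the dilation parameters $y_i$; since the two polynomials will be seen to agree on the Zariski-dense set $\Z_{>0}^m$, the identity then extends to all real $y_i$ by the same polynomiality argument used to pass from positive to arbitrary integers in Corollary \ref{cor: ehrhart-typeb}. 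One may also assume $P$ is $n$-dimensional: otherwise every $P_T \subseteq P$ is lower-dimensional, so $\mathrm{D}(P_T) = \emptyset$ and both sides of \eqref{eq: volume-typeb} vanish.

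So fix positive integers $y_1,\dots,y_m$; then $P$ is integral, $i(P,t) := |tP\cap\Z^n|$ is its Ehrhart polynomial, and its leading coefficient — the coefficient of $t^n$ — equals $\vol(P)$. Using $(Q_1+Q_2)-Q_2 = Q_1$ with $Q_1 = tP$ and $Q_2 = \square_{[n]}$, we obtain $i(P,t) = \bigl|\bigl((tP+\square_{[n]})-\square_{[n]}\bigr)\cap\Z^n\bigr|$. Writing $\square_{[n]} = \sum_{j=1}^n\Delta^0_{\{j\}}$ and $tP = \sum_{i=1}^m (ty_i)\Delta^0_{S_i}$, the polytope $P' := tP+\square_{[n]}$ is a nonnegative-integer Minkowski combination of simplices $\Delta^0_S$, so Corollary \ref{cor: ehrhart-typeb} gives
\[
    i(P,t) \;=\; \sum_{T\in\ads_n}\ \sum_{\bba\in\mathrm{D}(P'_T)} \binom{ty_1}{a_1}\cdots\binom{ty_m}{a_m}\binom{1}{a_{m+1}}\cdots\binom{1}{a_{m+n}},
\]
where in $\bba = (a_1,\dots,a_{m+n})$ the last $n$ coordinates record the appended singletons $\Delta^0_{\{1\}},\dots,\Delta^0_{\{n\}}$.

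The next step is to read off the coefficient of $t^n$. Each $\binom{ty_i}{a_i}$ is a polynomial of degree $a_i$ in $t$ with leading coefficient $y_i^{a_i}/a_i!$, whereas each $\binom{1}{a_k}$ with $k>m$ lies in $\{0,1\}$ and is constant in $t$; thus a summand has $t$-degree $a_1+\cdots+a_m$, which by the defining condition $a_1+\cdots+a_{m+n}=n$ of $\mathrm{D}(P'_T)$ is at most $n$, with equality exactly when $a_{m+1}=\cdots=a_{m+n}=0$. Hence the coefficient of $t^n$ in $i(P,t)$ is $\sum_{T\in\ads_n}\sum \frac{y_1^{a_1}}{a_1!}\cdots\frac{y_m^{a_m}}{a_m!}$, the inner sum ranging over $\bba\in\mathrm{D}(P'_T)$ with vanishing last $n$ entries. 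The last thing to check is that this index set coincides with $\mathrm{D}(P_T)$: since $S'_i\cap T = S_i\cap T$ for $i\leq m$ and padding by zeros does not change $\sum_{i\in I}a_i$ for $I\subseteq[m]$, the draconian inequalities of $P'_T$ indexed by subsets $I\subseteq[m]$ are precisely those defining $\mathrm{D}(P_T)$; and any draconian inequality of $P'_T$ indexed by an $I$ meeting $\{m+1,\dots,m+n\}$ is automatically satisfied once those coordinates of $\bba$ vanish, because adjoining such an index to $I$ leaves $\sum_{i\in I}a_i$ fixed while $\bigl|\bigcup_{i\in I}S'_i\cap T\bigr|$ can only increase. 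Substituting this identification gives exactly \eqref{eq: volume-typeb}.

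The binomial-degree bookkeeping and the draconian-sequence identification are routine; the place that requires care — and the main obstacle — is the polynomiality reduction of the first step, where one must invoke that the volume of a Minkowski sum of fixed polytopes scaled by parameters $y_i \geq 0$ is polynomial in the $y_i$, and reconcile the non-full-dimensional degenerate case on both sides before extending the identity from positive integers to arbitrary real coefficients. Everything else is a direct application of Corollary \ref{cor: ehrhart-typeb}.
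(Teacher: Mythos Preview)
Your proposal is correct and follows essentially the same route as the paper: both pad $tP$ by $\square_{[n]}=\sum_j\Delta^0_{\{j\}}$, apply Corollary~\ref{cor: ehrhart-typeb} to the resulting $(m+n)$-term Minkowski sum, extract the coefficient of $t^n$ via the leading terms $y_i^{a_i}/a_i!$ of $\binom{ty_i}{a_i}$, and then identify those $\bba\in\mathrm{D}(P'_T)$ with $a_{m+1}=\cdots=a_{m+n}=0$ with $\mathrm{D}(P_T)$. Your write-up is in fact slightly more careful than the paper's in justifying the polynomiality reduction and the draconian-sequence identification, but the argument is the same.
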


\begin{proof}
    To see that formula \eqref{eq: volume-typeb} gives the volume of $P$, it suffices to show that the formula holds for nonnegative integers $y_1, \dots, y_m$. Let $$Q^{(t)} := tP + \square_{[n]} = ty_1\Delta^0_{S_1} + \cdots + ty_m\Delta^0_{S_m} + \sym_{\{1\}} + \cdots + \sym_{\{n\}}.$$ We note that $\mathrm{D}(Q^{(t)}_T) \subseteq \Z^{m+n}_{\geq 0}$. Then, the Ehrhart polynomial of $P$ is given by
    \[i(P,t) = |(Q^{(t)} - \square_{[n]})\cap \Z^n| = \sum_{T \in \ads_n}\left(\sum_{\bba \in \mathrm{D}(Q^{(t)}_T)}\binom{y_1t}{a_1}\cdots \binom{y_mt}{a_m}\binom{1}{a_{m+1}}\cdots \binom{1}{a_{n+m}}\right).\]
    Since the volume of $P$ equals the leading coefficient of $i(P,t)$, it follows that
    \begin{align}\label{eq: volume-typeb2}
        \vol(P) = \sum_{T \in \ads_n}\left(\sum_{\underset{a_{i} = 0, \ \forall\, i\, >\, m}{\bba \in \mathrm{D}(Q^{(t)}_T)}}\frac{y_1^{a_1}}{a_1!}\cdots \frac{y_m^{a_m}}{a_m!}\right).
    \end{align}
    One observes that $\bba \in \mathrm{D}(Q^{(t)}_T)$ satisfies $a_{i} = 0$ for all $i\, >\, m$ if and only if $(a_1, \dots, a_m) \in \mathrm{D}(P_T)$. Thus, the volume formula \eqref{eq: volume-typeb} is equivalent to \eqref{eq: volume-typeb2}.
\end{proof}

\section{More Problems from A to B}

Our approach for computing the number of lattice points suggests that there seem to be many aspects of type B generalized permutohedra that can be explored using existing techniques and tools from the study of their type A counterparts. The following problems highlight some potential research directions.
\begin{op}
    In \cite[Section 7]{Postnikov2005}, Postnikov introduces building sets and nested complexes to describe the face posets of some type A generalized permutohedra. Can we give a combinatorial description of the faces of type B generalized permutohedra using similar combinatorial models as building sets and nested complexes?
\end{op}

\begin{op}
    In \cite{PostnikovEtal2008}, Postnikov, Reiner, and Williams compute the $f$ and $h$-polynomials of a family of simple type A generalized permutohedra using building sets and the corresponding preposets.  Can we employ a similar approach to compute the $f$ and $h$-polynomials of type B generalized permutohedra?
\end{op}

\begin{op}
    Bastidas shows in \cite{Bastidas2021} that every type B generalized permutohedron can also be written as the Minkowski sum of the simplices $\Delta_S$ and $\sym_S$ where $S \in \ads$ are admissible subsets such that $\min(|i|\ | \ i \in S) \in S.$ That is, the family of admissible subsets $\Delta_S$ and $\sym_S$ where $\min(|i|\ | \ i \in S) \in S$ is a ``basis'' for the type B generalized permutohedra. Find a formula for the Ehrhart polynomial of type B generalized permutohedra with respect to this basis (directly without transforming this basis to the basis used in this paper).
\end{op}

\begin{op}
    An integral polytope is said to be \emph{Ehrhart positive} if every coefficient of its Ehrhart polynomial is positive. Postnikov's formula \eqref{eq: lattice-type-A} implies that every type $A$ generalized permutohedron of the form $y_1\Delta_{I_1} + \cdots + y_m\Delta_{I_m} \subset \R^n$ is Ehrhart positive provided $y_i$ are positive integers for all $i\in [m]$. Our formula \eqref{eq: lattice-points-typeb2} in Corollary \ref{cor: ehrhart-typeb} does not make it immediately clear whether Ehrhart positivity holds for type $B$ generalized permutohedra in a similar situation. This leads to the question: Is every type $B$ generalized permutohedron of the form $y_1\sym_{S_1} + \cdots + y_m\sym_{S_m} \subset \mathbb{R}^n$ Ehrhart positive provided $y_i$ are positive integers for all $i \in [m]$?
\end{op}




\bibliographystyle{abbrv}
\bibliography{BibContainer}

\end{document}